\newif\iflabels
\numberwithin{equation}{section}
\theoremstyle{plain}                
\newtheorem{theorem}{Theorem}[section]
\newtheorem{lemma}[theorem]{Lemma}
\newtheorem{proposition}[theorem]{Proposition}
\newtheorem{corollary}[theorem]{Corollary}
\theoremstyle{definition}           
\newtheorem{definition}[theorem]{Definition}
\newtheorem{example}[theorem]{Example}
\theoremstyle{remark}
\newtheorem{remark}[theorem]{Remark}
\newcommand{\define}[1]{{\textbf{#1}}}
\providecommand{\alias}{}
\renewcommand{\alias}[1]{\providecommand{#1}{}\renewcommand{#1}}
  \DeclarePairedDelimiter\ab{\langle}{\rangle} 
  \DeclarePairedDelimiter\abs{\lvert}{\rvert}   
  \DeclarePairedDelimiter\norm{\lVert}{\rVert}  
  \DeclarePairedDelimiterX\set[1]\{\}{ #1 }
  \DeclarePairedDelimiterX\sets[2]\{\}{ #1\,:\,#2 }
  \let\bPeexp\exp
  \let\exp\relax
  \DeclarePairedDelimiterXPP\exp[1]{\bPeexp}(){}{#1}
    \let\oldabs\abs \def\abs{\@ifstar{\oldabs}{\oldabs*}}
    \let\oldab\ab \def\ab{\@ifstar{\oldab}{\oldab*}}
    \let\oldnorm\norm \def\norm{\@ifstar{\oldnorm}{\oldnorm*}}
    \let\oldexp\exp \def\exp{\@ifstar{\oldexp}{\oldexp*}}
\DeclareMathOperator\diam{diam}
\DeclareMathOperator*\esssup{esssup}
  \newcommand{\opnorm}{\@ifstar\@opnorms\@opnorm}
  \newcommand{\@opnorm}[2][]{%
    \mathopen{#1|\mkern-1.5mu#1|\mkern-1.5mu#1|}
    #2
    \mathclose{#1|\mkern-1.5mu#1|\mkern-1.5mu#1|}
  }
\alias{\R}{{\mathbb R}}
\alias{\C}{{\mathbb C}}
\alias{\Z}{{\mathbb Z}}
\alias{\N}{{\mathbb N}}
\newcommand{\tf}[2]{\tfrac{#1}{#2}}
\newcommand{\oo}[1]{\tf{1}{#1}}
\newcommand{\tot}{\oo{2}} 
\newcommand{\pd}[2]{\frac{\partial #1}{\partial #2}}
\newcommand{\pD}[2]{\frac{\Delta #1}{\Delta #2}}
\newcommand{\ind}[1]{ 1_{{#1}}} 
\newcommand{\upn}[2]{#1^{(#2)}}
\newcommand{\efor}{\text{ for }}
\newcommand{\eforall}{\text{ for all }}
\newcommand{\eforeach}{\text{ for each }}
\newcommand{\eand}{\text{ and }}
\newcommand{\ewhere}{\text{ where }}
\newcommand{\vp}{\varphi}
\newcommand{\sA}{\mathcal{A}}
\newcommand{\sP}{\mathcal{P}}
\newcommand{\sS}{\mathcal{S}}
\newcommand{\sQ}{\mathcal{Q}}
\newcommand{\sF}{\mathcal{F}}
\newcommand{\bP}{\mathbb{P}}
\newcommand{\bF}{\mathbb{F}}
\newcommand{\bE}{\mathbb{E}}
\newcommand{\bQ}{\mathbb{Q}}
\newcommand{\bD}{\mathbb{D}}
\newcommand{\kz}{\boldsymbol{z}}
\newcommand{\kA}{\boldsymbol{A}}
\newcommand{\kB}{\boldsymbol{B}}
\newcommand{\kV}{\boldsymbol{V}}
\newcommand{\kZ}{\boldsymbol{Z}}
\newcommand{\tY}{U}
\newcommand{\lone}{L^1}
\newcommand{\ltwo}{L^2}
\newcommand{\lpee}{L^p}
\newcommand{\lque}{L^q}
\newcommand{\linf}{L^{\infty}}
\newcommand{\loi}{L^{1,\infty}}
\newcommand{\loq}{L^{1,q}}
\newcommand{\ltz}{L^{2,0}}
\newcommand{\lpz}{L^{p,0}}
\newcommand{\lto}{L^{2,1}}
\newcommand{\ltp}{L^{2,p}}
\newcommand{\ltq}{L^{2,q}}
\newcommand{\lqp}{L^{q,p}}
\newcommand{\lii}{L^{\infty, \infty}}
\newcommand{\sinf}{\sS^{\infty}}
\newcommand{\szer}{\sS^0}
\newcommand{\spee}{\sS^p}
\newcommand{\sque}{\sS^q}
\newcommand{\BMO}{\text{BMO}}
\newcommand{\bmo}{\text{bmo}}
\newcommand{\bmoh}{\bmo^{1/2}}
\newcommand{\mpee}{\mathcal{M}^p}
\newcommand{\rp}{(R_p)}
\newcommand{\rphr}{(\tilde{R}_p)}
\newcommand{\doi}{\bD^{1,\infty}}
\newcommand{\D}{\mathbb{D}^{1,2}}
\newcommand{\Doi}{{\mathbb D}^{1,\infty}}
\newcommand{\sibm}{\sinf\times\bmo}
\newcommand{\const}[1]{C=C(#1)}
\newcommand{\leqc}{\leq_C}
\newcommand{\qua}{\mathbf{Q}}
\newcommand{\tri}{\mathbf{T}}
\newcommand{\con}{\mathbf{C}}
\newcommand{\apb}{\mathbf{A}}
\newcommand{\mal}{\mathbf{M}}
\newcommand{\bsde}{\mathcal{B}}
\newcommand{\sam}{\set{a_m}}
\newcommand{\samp}{\set{a'_m}}
\newcommand{\sph}{\mathsf{S}^2}
\newcommand{\Saa}{S_{\alpha, \kA}}
\newcommand{\Sa}{S_{\kA}}
\newcommand{\ict}{\int_{\cdot}^T}
\newcommand{\bsdea}{\text{BSDE}\,(\kA)} 
\newcommand{\bsdeaa}{\text{BSDE}\,(\alpha, \kA)}
\newcommand{\ption}{(\tau_k)_{k=0}^m}
\newcommand{\itauk}{\ind{ [\tau_{k-1}, \tau_k] }}
\newcommand{\itauko}{\ind{ [\tau_{k-1}, \tau_k) }}
\newcommand{\itaup}{\ind{ [\tau,\tau'] }}
\newcommand{\itauT}{\ind{ [\tau_{m-1},T] }}
\newcommand{\Ptx}{\Phi^{\tau,\tau',\xi}}
\newcommand{\Yk}{\upn{Y}{k}}
\newcommand{\Yl}{\upn{Y}{l}}
\newcommand{\kZk}{\upn{\kZ}{k}}
\newcommand{\kZl}{\upn{\kZ}{l}}
\newcommand{\fk}{\upn{f}{k}}
\newcommand{\fl}{\upn{f}{l}}
\newcommand{\pik}{\upn{\pi}{k}}
\newcommand{\alphak}{\upn{\alpha}{k}}
\newcommand{\betak}{\upn{\beta}{k}}
\newcommand{\etak}{\upn{\eta}{k}}
\newcommand{\kAk}{\upn{\kA}{k}}
\newcommand{\xik}{\upn{\xi}{k}}
\newcommand{\xil}{\upn{\xi}{l}}
\begin{document}

\title{Existence and Uniqueness for non-Markovian Triangular Quadratic BSDEs}
\author{Joe Jackson and Gordan {\v Z}itkovi{\' c}} 
\date{}
\maketitle

\begin{abstract}
    We prove the existence and uniqueness of solutions to a class of quadratic BSDE systems which we call triangular quadratic. Our results generalize several existing results about diagonally quadratic BSDEs in the non-Markovian setting. As part of our analysis, we obtain new results about linear BSDEs with unbounded coefficients, which may be of independent interest. Through a non-uniqueness example, we answer a ``crucial open question" raised by Harter and Richou by showing that the stochastic exponential of an $n \times n$ matrix-valued $\BMO$ martingale need not satisfy a reverse H\"older inequality. 
\end{abstract}


\section{Introduction}

\subsection{Backward stochastic differential equations}

A backward stochastic differential equation (BSDE) is an expression of the form 

\begin{align} \label{introbsde}
    Y = \xi + \int_{\cdot}^T f( \cdot, Y, \kZ) dt - \int_{\cdot}^T \kZ d \kB. 
\end{align}
Here $\kB$ is a $d$-dimensional Brownian, $f = f(t, \omega, y, \kz) : [0,T] \times \Omega \to \R^n \times (\R^d)^n \to \R^n$ is a random field called the driver with various measurability and continuity constraints, and $\xi$ is an $n$-dimensional random vector called the terminal condition which is measurable with respect to $\sF_T$, where the filtration $ \bF = (\sF_t)_{0 \leq t \leq T}$ is the augmented filtration of $\kB$. A solution consists of two $\bF$-adapted processes $Y$ and $\kZ$, taking values in $\R^n$ and $(\R^d)^n$, respectively, satisfying \eqref{introbsde}. Our decision to view the co-domain of $\kZ$ as $(\R^d)^n$ rather than $\R^{n \times d}$, as well as our use of bold in \eqref{introbsde}, is due to notational considerations and will be explained in the notations and preliminaries section below. 

BSDEs are categorized largely based on the assumptions placed on the driver $f$. Roughly speaking, the equation \eqref{introbsde} is called
\begin{enumerate}
    \item linear if $f$ is linear in $y$ and $\kz$,
    \item Lipschitz if $f$ is Lipschitz in $y$ and $\kz$,
    \item quadratic if $f$ is Lipschitz in $y$ and depends quadratically on $\kz$, and
    \item Markovian if 
    \begin{align*}
        f(t, \omega, y, \kz) = \tilde{f}(t, X_t(\omega), y, \kz), \, \, 
        \xi = g(X_T),
    \end{align*}
    for some appropriate functions $\tilde{f}$, $g$, and diffusion $X$. 
\end{enumerate}
We also make a distinction between systems of BSDEs or multidimensional BSDEs ($n > 1$) and one-dimensional BSDEs ($n = 1$). 

BSDEs were first introduced by Bismut in \cite{Bis73}, who studied linear BSDEs in the context of stochastic control. In \cite{Pardoux-Peng}, Pardoux and Peng treated a general class of Lipschitz BSDEs, proving well-posedness of \eqref{introbsde} when $\xi \in L^2$. In \cite{Kobylanski}, Kobylanski provided an existence and uniqueness result for quadratic BSDEs in dimension one, under the assumption that $\xi$ is bounded. Quadratic systems have proved more challenging, and in fact a non-existence example in \cite{FreRei11} shows that a full generalization of Kobylanski's existence result to quadratic systems is impossible. In order to obtain existence results for quadratic BSDE systems, it is therefore necessary to make additional assumptions on the driver $f$ or the terminal condition $\xi$. One possibility is to impose smallness, as in \cite{Tevzadze}, where a fixed point argument is used to prove existence for quadratic systems when $\xi$ is small enough in $L^{\infty}$. 

Another possibility is to make additional structural assumptions on the driver. For example, in the Markovian setting, \cite{xing2018} establishes existence under a general structural constraint. For non-Markovian equations, some existence results have been obtained for ``diagonally quadratic" drivers (see \cite{HuTan16} and \cite{fanhutang}) and for drivers whose nonlinearity has a ``quadratic linear" form \footnote{We are following \cite{xing2018} by using the name quadratic linear to refer to the drivers studied in \cite{Nam19}, but the name is not actually used in \cite{Nam19}} (see \cite{Nam19}). 

The applications of quadratic BSDEs to stochastic optimal control, stochastic games, and financial economics (see, e.g. \cite{El-Karoui-Hamadene}, \cite{Cheridito-et.al}, \cite{Espinosa-Touzi}) and \cite{kardaras2017incomplete}), as well as to partial differential equations and even stochastic differential geometry (see \cite{Darling}) have been well-documented. Indeed, in \cite{pen99}, Peng lists existence for quadratic systems of BSDEs among the most important open questions in the field. This paper is motivated in part by the many applications of BSDEs, but also by the need for new probabilistic tools to study non-Markovian quadratic BSDEs.

\subsection{Our results}

\paragraph{Triangular Quadratic BSDEs.}

The main result of the paper, Theorem \ref{thm:main-tri}, is an existence and uniqueness result for equations whose drivers are triangular quadratic. This means, roughly speaking, that the driver $f$ is quadratic and its $i^{\text{th}}$ component depends sub-quadratically on the $j^{\text{th}}$ component of $\kz$, whenever $j > i$. In addition to this primary structural condition, we assume also that $f$ satisfies the (AB) condition from \cite{xing2018}, and that $f$ has some regularity in the sense of Malliavin calculus. Triangular quadratic drivers are generalizations of diagonally quadratic drivers, and in that sense our results generalize those of \cite{HuTan16} (though a strict comparison of the results is not possible because of the Malliavin regularity and (AB) assumptions). Our triangular quadratic drivers also generalize those considered in \cite{luo2020}, which have a triangular structure, but of a much more specific form.

To prove Theorem \ref{thm:main-tri}, we use the approach of \cite{Briand-Elie} for one-dimensional quadratic BSDEs. Namely, we first assume that $\xi$ has bounded Malliavin derivative and $f$ is smooth in $y$ and $\kz$, and we produce a sequence of Lipschitz drivers $f^k$ approximating $f$ with corresponding solutions $(Y^k, \kZ^k)$. We differentiate each of the approximate equations to get a linear BSDE, whose coefficients can be estimated in the space $\bmo$ (see the notations and preliminaries section below). Finally we apply estimates for linear BSDEs with $\bmo$ coefficients to conclude that $\sup_k \norm{\kZ^k}_{\linf} < \infty$, and thus $(Y^k,\kZ^k)$ solves the original equation when $k$ is sufficiently large. The success of this approach in dimension one relies on two key facts: 
\begin{enumerate}
    \item When $n = 1$, we can always guarantee that $\sup_k \big(\norm{Y^k}_{\sinf} + \norm{\kZ^k}_{\bmo}\big) < \infty$, i.e. the approximation scheme is bounded.
    \item There is a good theory for one-dimensional linear BSDEs with $\bmo$ coefficients, and in particular a-priori estimates for such equations are available. 
\end{enumerate}
Unfortunately, neither of these statements generalize to higher dimensions. Nevertheless, the condition (AB) does allow us to conclude that $\sup_k \big(\norm{Y^k}_{\sinf} + \norm{\kZ^k}_{\bmo}\big) < \infty$, and the triangular structure, together with the new results for linear BSDEs with $\bmo$ coefficients obtained in Section \ref{sec:linear}, provides the necessary estimates. 

This is not the first paper to use the approach of \cite{Briand-Elie} to study quadratic systems. The same general strategy was adopted by Harter and Richou in \cite{harter2019}, who use a similar approximation scheme but assume a-priori that $\sup_k \norm{\kZ^k}_{\bmo}$ is small enough, and then rely on the well-posedness of linear equations with small $\bmo$ coefficients. They proceed to check the a-priori smallness assumption in various cases, allowing them to recover results about BDSEs with small terminal condition or diagonally quadratic driver. Thus, while both the present paper and \cite{harter2019} are concerned with applying the strategy of \cite{Briand-Elie} to higher dimensions, \cite{harter2019} uses \textit{smallness} to overcome the difficulties presented by systems, while the present paper uses additional \textit{structural} assumptions.

We note that while our focus is specifically on the non-Markovian setting, our results provide new insights even in the Markovian case, since the $\linf$ estimates on $\kZ$ provided in Theorem \ref{thm:main-tri} amount in this case to an estimate on the Lipschitz constant of a Markovian solution. For example, our results show that under appropriate conditions, the locally H\"olderian Markovian solutions produced in \cite{xing2018} are actually Lipschitz. Interestingly, this additional regularity is achieved entirely through probabilistic arguments. 

To illustrate our main result, we prove in section \ref{sec:game} the existence of a Nash equilibrium in a simple two player game. The game contains a certain asymmetry between the two players, which leads to a triangular structure in the corresponding BSDE system. Thus the example sheds some light on what types of structures lead to triangular quadratic systems. The game is inspired by a semi-linear game treated in \cite{xing2018} and elsewhere. 

\paragraph{Linear BSDEs with bmo coefficients.}

In order to execute the strategy outlined above for triangular quadratic BSDEs, we develop some new results for linear BSDEs with $\bmo$ coefficients, which may be of independent interest. The results of \cite{Delbaen-Tang} and also \cite{harter2019} show that linear BSDEs with $\bmo$ coefficients are well-posed when their coefficients are small (or locally small, in the sense of sliceability). In Section \ref{sec:linear}, we show how smallness can be mixed with structural conditions on the coefficient matrix to get stronger results. For example, Corollary \ref{cor:sup-lin-exc} shows, roughly speaking, that we need only assume smallness above the diagonal of the coefficient matrix to get well-posedness. We also provide a non-uniquess example (Example \ref{exa:non-exist}) which demonstrates the necessity of either smallness or structural assumptions on the coefficients.

\paragraph{No reverse H\"older in higher dimensions.}

If $M$ is a $\BMO$ martingale, then its stochastic exponential $S = \mathcal{E}(M)$ is a uniformly integrable martingale which sastisfies the reverse H\"older inequality $\rp$ for some $p > 1$, i.e. the estimate 
\begin{align*}
    \bE_{\tau}[|S_T|^p] \leqc |S_{\tau}|^p
\end{align*}
holds for each stopping time $\tau$ with $0 \leq \tau \leq T$. In fact, this condition is essentially equivalent to membership in $\BMO$ (see Theorem 3.4 of \cite{Kazamaki}). The reverse H\"older inequality is an important tool which can be used to analyze linear BSDEs with $\bmo$ coefficients. When $M$ is instead an $n \times n$ matrix of $\BMO$ martingales, it is still possible to define the stochastic exponential $S$ of $M$, which is an $\R^{n \times n}$-valued local martingale. Likewise, one can generalize the reverse H\"older inequality $\rp$ to matrix-valued processes. Recognizing the potential applications to quadratic systems, Harter and Richou posed in Remark 3.5 of \cite{harter2019} the following ``crucial open question" : if $M$ is an $n \times n$ matrix of $\BMO$ martingales, does its stochastic exponential $S$ satisfy a reverse H\"older inequality? In Corollary \ref{cor:norp}, we use Example \ref{exa:non-exist} to answer this question in the negative. Our construction takes advantage of the non-uniqueness of martingales on manifolds, together with the connection between martingales on manifolds and BSDEs explained in \cite{Darling}. Indeed, the process $Y$ constructed in Example \ref{exa:non-exist} is (up to applying a coordinate chart) a non-constant martingale on the sphere with a constant terminal value.

\subsection{Structure of the paper}

In the remainder of the introduction, we fix notation and other conventions. Section \ref{sec:linear} contains our analysis of linear BSDEs with $\bmo$ coefficients, including both new well-posedness results and our non-uniqueness example. Section \ref{sec:linear} closes with a discussion of the reverse H\"older inequality. In Section \ref{sec:triangular}, we state and prove Theorem \ref{thm:main-tri}, our main existence and uniqueness result for triangular quadratic BSDEs.

\subsection{Notations and preliminaries}\label{sse:notation}

\paragraph{The probabilistic setup.} We fix a probability space $(\Omega, \sF,
  \bP)$ which hosts a $d$-dimensional Brownian motion $\kB$, and a deterministic
  time horizon $T < \infty$. The filtration $\bF = (\sF_t)_{0 \leq t \leq T}$ is
  the augmented filtration of $\kB$, and we use the shortcut $\bE_{\tau}[\cdot]$
  for the conditional expectation $\bE[\cdot| \sF_{\tau}]$.

\paragraph{Universal constants.} We fix a natural number $n$, which will be the
dimension of the unknown process $Y$. We emphasize here that $n$, $d$, and $T$
are considered fixed throughout the paper. A constant which only depends on $n$,
$d$, or $T$ is said to be universal. Depending on the context, constants that
depend on additional quantities  may also be called universal, but if such
additional dependencies exist, they will always be made clear. More precisely,
if a constant $C$ depends on $\norm{\gamma}_{\bmo}$ (in addition to $n$, $d$ or
$T$) we write $\const{\norm{\gamma}_{\bmo}}$. We use the notation $A \leqc B$
for $A \leq C B$ and follow Hardy's convention that the implied constant is
allowed to change from use to use.

\paragraph{Conventions for multi-dimensional processes.}
To curb the proliferation of indices, we use the following convention All our
processes take values in Euclidean spaces and each will be interpreted either as
a scalar, an $n$-dimensional vector (column by default), or an $n\times n$
matrix. Parting slightly from the norm, we allow the entries of these
linear-algebraic objects to take values either in $\R$ or in $\R^d$. To
distinguish between the two cases, we use the bold font for the $\R^d$-valued
case and the regular font for the $\R$-valued case. Matrix multiplication
retains the standard definition, with the proviso that either the $\R^d$-inner product or the scalar product of an $\R$-valued scalar and an $\R^d$-valued vector
be used in lieu of the scalar multiplication, as appropriate. This way, for
example if $\kA$ denotes a process with values in $(\R^d)^{n\times n}$ and $\kZ$
a process with values in $(\R^d)^n$, we interpret the former as an $n\times
n$-matrix-valued and the later as $n$-vector-valued, both with entries in
$\R^d$. Their product is a well-defined process $\gamma$ with values in $\R^n$:
\begin{align}\label{notation-1}
  \gamma = \kA \kZ \text{ means } \gamma^i = \sum_{k=1}^n \kA^i_k\cdot \kZk,
\end{align}
where $\cdot$ denotes the inner product on $\R^d$. With the above convention in
mind, we usually drop all the indices from notation. In the cases where they do
get included (mostly for clarity) we follow Einstein's convention of implicit
summation over repeating pairs of lower and upper indices. The indices of a
$\R^d$-valued vector $\kz$ are denoted by $\kz^{(1)},\dots, \kz^{(d)}$. 

\paragraph{Finite differences and derivatives}
Let $x_1, x_2 \in \R^m$ be fixed. For a function $F:\R^m\to \R$ and $1\leq j
  \leq m$, we define
  \[ \left(\pD{F}{x}\right)_j = \frac{F(x_2^1,\dots, x_2^{j-1}, x_2^j,
  x_1^{j+1},\dots, x_1^m) - F(x_2^1,\dots, x_2^{j-1}, x_1^j, x_1^{j+1},\dots,
  x_1^m)}{x_2^j - x_1^j},\] where the convention $0/0=0$ is used. We always
  interpret $\pD{F}{x}$ as a row vector. 
  
  If the components of $x$ split naturally into groups, as in the case
  $x=(y,\kz)\in \R^n \times ((\R^d)^n)$, we split the components of $\pD{F}{x}$
  accordingly. This way we ensure that the following ``total-differential"
  relationship holds when $x_i=(y_i, \kz_i)$, $i=1,2$:
  \begin{align*}
    F(y_2, \kz_2) - F(y_1, \kz_1) = \pD{F}{y}(y_2 - y_1) + \pD{F}{\kz}(\kz_2-\kz_1)
  \end{align*}
  We note that the product of $\pD{F}{\kz}$ and $(\kz_2-\kz_1)$ above needs to
  be interpreted as in \eqref{notation-1}, i.e., as a product of a row and a column
  vector with $\R^d$-valued components. 

  A similar notational philosophy is applied to derivatives, too. Given function
  $F:\R^n\times (\R^d)^n \to \R$, we set 
  \begin{align*}
    \pd{f}{y} = ( \pd{f}{y^j} )_j\quad \  \pd{f}{\kz^j} 
    =  (\pd{f}{\kz^j(1)}, \dots, \pd{f}{\kz^j(d)}),\quad
    \pd{f}{\kz} = (\pd{f}{\kz^j})_j.
  \end{align*}
  When applied to an $\R^n$-valued functions, both $\Delta$ and $\partial$ are
  applied componentwise without any notational changes. 

\paragraph{Integation conventions.}
When integrating, we often replace the upper or lower index of integration by
$\cdot$, indicating that we are dealing with a function/process of that index.
Moreover, we often drop the time-parameter of the integrand, and, in an act of
notation abuse, use $dt$ to denote integration with respect to Lebesgue measure.
This way, for example, $\ict \gamma\, dt$ denotes the process $t \mapsto
\int_t^T \gamma_u\, du$.

In the spirit of the previous paragraph, the Brownian motion $\kB$ is
interpreted as an $\R^d$-valued ``scalar'' process and, therefore, typeset in
bold. We use the notation $\int \kZ\, d\kB$ as the shortcut for a componentwise
sum of $d$ one-dimensional stochastic integrals.

\paragraph{Spaces of processes.}
Assuming that all Euclidean spaces are equipped with the standard Euclidean
norm, the definitions of the following spaces apply equally well to scalar,
vector of matrix-valued processes, with entries in $\R$ or $\R^d$:
\begin{itemize}
  \item For $1 \leq p \leq \infty$, $\lpee$ denotes the space of $p$-integrable
  random variables, vectors or matrices.
  \item For $1 \leq p \leq \infty$, $\spee$ denotes the space of all continuous
  processes $Y$ such that
  \begin{align*}
    \norm{Y}_{\sS^p} \coloneqq
    \norm{Y^*}_{\lpee} < \infty \ewhere Y^* = \sup_{0 \leq t \leq T} Y_t.
  \end{align*}
  We write $Y\in \szer$ if $Y$ is adapted an continuous.
  \item For $1 \leq p \leq \infty$, $\mpee$ is the set of all martingales in $\spee$. 
  \item For $1 \leq p ,q \leq \infty$, $\lqp$ denotes the space of progressive
  processes $\gamma$ such that
  \begin{align*}
    \norm{\gamma}_{\lqp}^2 \coloneqq
    \norm{\left(\int_0^T \abs{\kZ_t}^p dt\right)^{q/p}}_{L_p} < \infty.
  \end{align*}
  We write $\gamma \in \lpz$ if $\int_0^T \abs{\gamma}^p\, dt<\infty$, a.s.
  Processes in $\lqp$-spaces that agree $dt\otimes d\bP$-a.e, are identified,
  unless we explicitly  state otherwise. 
  \item $\BMO$ denotes the space of continuous martingales $M$ such that
  \begin{align*}
    \norm{M}_{\BMO} \coloneqq
    \esssup_{\tau} \norm{\bE_{\tau}[ \abs{M_T - M_{\tau} }^2 ]}_{L^{\infty}}^{\frac{1}{2}}
    < \infty,
  \end{align*}
  where the supremum is taken over all stopping times $0 \leq \tau \leq T$.
  \item $\bmo$ denotes the space of progressive processes $\gamma$ such that
  \begin{align*}
    \norm{\gamma}_{\bmo}^2 \coloneqq \sup_{\tau}
    \bE_{\tau}\left[\int_{\tau}^T \abs{\gamma}^2 ds\right]
    < \infty.
  \end{align*}
  \item $\bmo^{1/2}$ denotes the space of progressive processes $\beta$ such
  that
  \begin{align*}
    \norm{\gamma}_{\bmo^{1/2}} \coloneqq
    \sup_{\tau} \bE_{\tau}\left[\int_{\tau}^T \abs{\gamma}\, ds\right]
    < \infty, \text{ i.e., }
    \norm{\gamma}_{\bmo^{1/2}} = \norm{\sqrt{\abs{\gamma}}}^2_{\bmo}.
  \end{align*}
\end{itemize}

If necessary, we emphasize the co-domain of the space of processes under consideration, e.g. by writing $\bmo(\R^d)$ for the space of $\bmo$ processes taking values in $\R^d$. All of these spaces can be considered with respect to an equivalent probability
measure $\bQ$, which we notate in a natural way when necessary (e.g.,
$\lpee(\bQ)$).

\section{Linear BSDE with bmo coefficients}\label{sec:linear}

\subsection{Model estimates and a non-uniqueness example}
We start with two straightforward estimates for semimartingales which we think
of as solutions of linear BSDEs of martingale-representation type. These
estimates will serve as  model a-priori estimate for more general linear BSDEs.
\begin{proposition} \label{pro:model-1} There exists a universal constant $C$
  with the following property: suppose that $Y$ is a semimartingale with the
  decomposition
  \begin{align*}
    Y = Y_0 + \int_0^{\cdot} \beta_u\, du + \int_0^{\cdot} \kZ\, d\kB,
    \ewhere \int_0^{\cdot} \kZ\, d\kB \text{ is a martingale. }
  \end{align*}
  \begin{enumerate}
    \item  If   $Y_T\in \linf$ and $\beta \in \bmoh$ then $Y\in \sinf$,
    $\kZ\in\bmo$ and
    \begin{align*}
      \norm{Y}_{\sinf} + \norm{\kZ}_{\bmo} 
      \leqc \norm{Y_T}_{\linf} + \norm{\beta}_{\bmoh}.
    \end{align*}
    \item  If $Y_T\in \lque$ and $\beta \in \loq$ then $Y\in \sque$,
    $\kZ\in\ltq$ and
    \begin{align*}
      \norm{Y}_{\sque} + \norm{\kZ}_{\ltq} 
      \leqc \norm{Y_T}_{\lque} + \norm{\beta}_{\loq}.
    \end{align*}
  \end{enumerate}
\end{proposition}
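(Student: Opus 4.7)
The plan is a two-step template applied to both parts. First I derive a bound on $Y$ by taking a conditional expectation in the identity
\begin{align*}
  Y_\tau = \bE_\tau\!\Big[Y_T - \tint_\tau^T \beta\, du\Big],
\end{align*}
which is legitimate for every stopping time $\tau \leq T$ because $\int_0^\cdot \kZ\, d\kB$ is, by hypothesis, a true martingale. Second, I recover control of $\kZ$ from a quadratic-variation computation in part (1) and from the Burkholder--Davis--Gundy inequality in part (2), using the bound on $Y$ already in hand.

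For part (1), taking absolute values in the identity above gives
\begin{align*}
  |Y_\tau| \leq \norm{Y_T}_{\linf} + \bE_\tau\!\Big[\tint_\tau^T |\beta|\, du\Big] \leq \norm{Y_T}_{\linf} + \norm{\beta}_{\bmoh}
\end{align*}
a.s., so $Y \in \sinf$ with the desired bound. To treat $\kZ$, I apply It\^o's formula to $|Y|^2$ on $[\tau, T]$, rearrange to isolate $\int_\tau^T |\kZ|^2\, du$, and take $\bE_\tau$; the stochastic integral $\int Y\cdot \kZ\, d\kB$ disappears after a routine localization, justified by the $\sinf$-bound on $Y$ just obtained. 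This yields
\begin{align*}
  \bE_\tau\!\Big[\tint_\tau^T |\kZ|^2\, du\Big]
  \leq \bE_\tau[|Y_T|^2] + 2\norm{Y}_{\sinf}\, \bE_\tau\!\Big[\tint_\tau^T |\beta|\, du\Big]
  \leq \norm{Y_T}_{\linf}^2 + 2\norm{Y}_{\sinf}\norm{\beta}_{\bmoh},
\end{align*}
and taking the supremum over $\tau$ together with the $\sinf$-bound on $Y$ produces, up to a universal constant, a square of the claimed right-hand side; the $\bmo$-estimate on $\kZ$ follows.

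For part (2), the same conditional-expectation identity together with Doob's maximal inequality in $\lque$ gives
\begin{align*}
  \norm{Y^*}_{\lque} \leqc \norm{|Y_T| + \tint_0^T |\beta|\, du}_{\lque} \leq \norm{Y_T}_{\lque} + \norm{\beta}_{\loq}.
\end{align*}
For the $\ltq$-bound on $\kZ$, I rewrite the martingale part as $\int_0^\cdot \kZ\, d\kB = Y - Y_0 - \int_0^\cdot \beta\, du$, so its running maximum is bounded pointwise by $2Y^* + \int_0^T |\beta|\, du$; the BDG inequality then transfers this bound to $\bigl(\int_0^T |\kZ|^2\, du\bigr)^{1/2}$ in $\lque$, and combining with the $\sque$-estimate on $Y$ yields the claim.

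Main obstacle: neither argument presents a real obstacle --- both are short and essentially canonical. The only pieces requiring mild care are the localization justifying the vanishing of $\bE_\tau[\int_\tau^T Y\cdot \kZ\, d\kB]$ in part (1), and the observation that the Doob and BDG constants depend only on $q$, so the dependence on $q$ is absorbed into the implicit constant (which should be tracked accordingly).
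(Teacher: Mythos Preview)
Your proof is correct. The paper actually states Proposition~\ref{pro:model-1} without proof, treating it as a standard ``model estimate''; the canonical route you take --- conditional expectation for $Y$, then It\^o on $|Y|^2$ for the $\bmo$-bound in part~(1), and Doob plus BDG in part~(2) --- is exactly the one the authors have in mind, as indicated by their sketch for the closely related Proposition~\ref{pro:model-2} (``the dynamics of $\abs{Y}^2$ and estimation via standard inequalities \dots\ the BDG inequality is used in the $\lque$-case''). One minor remark: your use of Doob's maximal inequality in part~(2) tacitly requires $q>1$, so the universal constant there depends on $q$; you already flag this, and it is consistent with how the paper uses the result downstream.
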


If we add a linear $\kZ$-dependence into the drift term, the magnitude of
$(Y,\kZ)$ can still be estimated by the inputs, except that now we need to
measure the size of the \emph{whole path} of $Y$ instead of just its terminal
value:
\begin{proposition} \label{pro:model-2} Given $\kA\in\bmo$, there exists a
  universal constant $\const{\norm{\kA}_{\bmo}}$ with the following property:
  suppose that $Y$ is a semimartingale that admits a decomposition of the form
  \begin{align*}
    Y = Y_0 + \int_0^{\cdot} (\kA\cdot\kZ + \beta)\, dt + \int_0^{\cdot} \kZ\, d\kB,
  \end{align*}
  where $\int_0^{\cdot} \kZ\, d\kB$ is a martingale.
  \begin{enumerate}
    \item  If   $Y\in\sinf$ and $\beta \in \bmoh$ then  $\kZ\in\bmo$ and
    \begin{align*}
      \norm{\kZ}_{\bmo} \leqc \norm{Y}_{\sinf} + \norm{\beta}_{\bmoh}.
    \end{align*}
    \item  If $Y\in \sque$ and $\beta \in \loq$ then $\kZ\in\ltq$ and
    \begin{align*}
      \norm{\kZ}_{\ltq} \leqc \norm{Y}_{\sque} + \norm{\beta}_{\loq}.
    \end{align*}
  \end{enumerate}
\end{proposition}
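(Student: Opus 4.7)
The plan is to start from Itô's formula applied to $|Y|^2$:
\[
  d(|Y|^2) = 2\,Y\cdot(\kA\kZ+\beta)\,dt + |\kZ|^2\,dt + 2\,Y\cdot\kZ\,d\kB,
\]
rearranged so that $\int_\tau^T|\kZ|^2\,dt$ is expressed in terms of $|Y|^2$, the cross term $\int_\tau^T Y\cdot\kA\kZ\,dt$, the $\beta$ term, and a local martingale (whose integrability is handled by a standard localization, using $Y$'s $\sinf$ resp.\ $\sque$ bound). The entire task reduces to controlling the cross term using the $\bmo$-norm of $\kA$; the two parts differ only in how one globalizes the resulting estimate.

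For part (1), I would take the conditional expectation $\bE_\tau[\,\cdot\,]$, apply Cauchy--Schwarz in $(t,\omega)$ under $\sF_\tau$ to bound
\[
  \bE_\tau\Bigl[\int_\tau^T |\kA|\,|\kZ|\,dt\Bigr] \leq \|\kA\|_\bmo \cdot \bigl(\bE_\tau[\tint_\tau^T|\kZ|^2\,dt]\bigr)^{1/2},
\]
and note $\bE_\tau[\int_\tau^T|\beta|\,dt]\leq\|\beta\|_{\bmoh}$. Taking the supremum over $\tau$ and setting $x=\|\kZ\|_\bmo$ yields the quadratic inequality
\[
  x^2 \leq \|Y\|_{\sinf}^2 + 2\|Y\|_{\sinf}\|\kA\|_\bmo\,x + 2\|Y\|_{\sinf}\|\beta\|_{\bmoh},
\]
from which solving the quadratic in $x$ gives the desired bound, with constant depending on $\|\kA\|_\bmo$.

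For part (2), I would instead work on $[0,T]$ and apply Young's inequality, $2|Y||\kA||\kZ|\leq\eps|\kZ|^2 + \eps^{-1}|Y|^2|\kA|^2$, to absorb an $\eps|\kZ|^2$ piece into the left-hand side. Raising to the power $q/2$, taking expectations, and handling the stochastic integral term via BDG followed by another Young's reduces matters to estimating
\[
  \bE\Bigl[\Bigl(\int_0^T|Y|^2|\kA|^2\,dt\Bigr)^{q/2}\Bigr].
\]
The main obstacle lies here: a naive H\"older split would require $\int_0^T|\kA|^2\,dt\in \linf$, which is false. Instead, I would invoke a Garsia-type maximal inequality for $\bmo$, namely
\[
  \Bigl\|\int_0^T X\,d\langle M\rangle\Bigr\|_{\lpee} \leqc \|M\|_{\BMO}^2\,\|X^*\|_{\lpee}
\]
for $X\ge 0$ progressive, where $M$ is the scalar $\BMO$ martingale with $\langle M\rangle = \int |\kA|^2\,dt$ (so $\|M\|_{\BMO} = \|\kA\|_\bmo$). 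The $p=1$ case follows directly from the $\bmo$ definition via integration by parts, and the general case follows by duality against $L^{p'}$ after passing $X$ outside the conditional expectation. Applied with $X=|Y|^2$ and $p=q/2$, this gives a bound by $\|\kA\|_\bmo^q\|Y\|_{\sque}^q$, which closes the estimate for $\|\kZ\|_{\ltq}$.
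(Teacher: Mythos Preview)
Your proposal is correct and follows precisely the route the paper sketches: the paper's proof consists of the single sentence ``the proof follows the standard route, using the dynamics of $|Y|^2$ and estimation via standard inequalities along the way (the BDG inequality is used in the $L^q$-case),'' and your argument is exactly a fleshing-out of that sentence. Your identification of the Garsia/energy-type inequality $\|\int_0^T X\,d\langle M\rangle\|_{\lpee}\leqc \|M\|_{\BMO}^2\|X^*\|_{\lpee}$ as the missing ingredient in part~(2) is apt; one clean way to justify it is to combine the $p=1$ case (your integration-by-parts argument indeed gives $\bE_\tau[\int_\tau^T X\,d\langle M\rangle]\leq \|M\|_{\BMO}^2\,\bE_\tau[X^*_T]$) with the Garsia--Neveu lemma rather than the duality argument you hint at.
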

\begin{proof}
  We omit the details as the proof follows the standard route, using the
  dynamics of $\abs{Y}^2$ and estimation via standard inequalities along the way
  (the BDG inequality is used in the $\lque$-case).
\end{proof}
\medskip

It is interesting to observe that the necessity of the inclusion of the entire
path of $Y$ on the right-hand side is not a defect of the method of proof. The
following example shows that in a clear way.  It exhibits a linear BSDE in
dimension $n=2$ with $\bmo$-coefficients which admits multiple solutions. As
such, if cannot satisfy the estimates of Proposition \ref{pro:model-1} since
they, in particular, imply uniqueness.

\begin{example} \label{exa:non-exist} We construct an $\R^{2\times 2}$-valued
  $\bmo$ process $A$ such that the following equation, where $n=2$ and $d=1$,
  admits a nontrivial solution:
  \begin{align} \label{non-exist-bsde}
    Y = \ict A Z \, dt - \int_{\cdot}^T Z\, dB
  \end{align}

  The construction will use some language from stochastic differential geometry.
  See \cite{Lee} for the definitions  of Riemannian metrics, connections, and
  Christoffel symbols, and see \cite{Emery} for the definition of martingales on
  manifolds. We will also use the connection between martingales on manifolds
  and BSDEs established by Darling in \cite{Darling}.

  First, we need an $\R$-valued martingale $M$ such that $\bP[M_T = \pi] =
    \bP[M_T = - \pi] = 1/2$. For example, we could set
  \begin{align*}
    M_t = \bE[\eta | \sF_t] \efor 0 \leq t \leq  T, \ewhere
    \eta = \begin{cases}
      \pi   & B_T \geq 0 \\
      - \pi & B_T < 0,
    \end{cases}
  \end{align*}
  By the martingale representation theorem, we have $M_t = \int_0^t U_s\, dB_s$
  for some square-integrable process $U$. In fact, since $\int U dB$ is a
  bounded martingale, $U \in \bmo(\R)$.

  Next, let $\sph$ denote the unit sphere in $\R^2$, and let $\phi : \sph
    \setminus \{(0,0,1)\} \to \R^2$ be the stereographic projection from the
    north pole:
  \begin{align*}
    \phi(x_1, x_2,y) = \Big( \frac{x_1}{1 - y}, \frac{x_2}{1-y}\Big)
    \efor (x_1,x_2,y) \in \sph\setminus\{(0,0,1)\}.
  \end{align*}
  Let $h$ denote the round Riemannian metric on $\sph$ (i.e. the metric induced
  by the inclusion into $\R^2$), and let $g = \phi_{*} h$ be the corresponding
  metric on $\R^2$. The Christoffel symbols of $g$ can be computed explicitly as
  \begin{align*}
    \Gamma_{ij}^k = \frac{-2}{1 + \abs{x}^2} \big(x_j\delta_{ik} +
    x_i \delta_{jk} - x_k \delta_{ij} \big), \, \, 1 \leq i,j,k \leq 2,
  \end{align*}
  where $x = (x_1,x_2) \in \R^2$ and $\delta_{\cdot, \cdot}$ denotes the
  Kronecker delta function.

  \medskip

  The path $t \mapsto (\cos(t), \sin(t))$ is a geodesic on $(\R^2, g)$, since it
  is mapped by $\phi^{-1}$ to a geodesic traveling along the equator of $\sph$
  at constant speed. Thus, the process
  \begin{align*}
    (\tY_t^1, \tY_t^2) = (\cos(M_t), \sin(M_t))
  \end{align*}
  is a martingale on $(\R^2, g)$ (see Proposition 4.32 of \cite{Emery} for
  details). Therefore by Lemma 2.2 of \cite{Darling}, there exists an
  $\R^2$-valued adapted process $Z$ such that $(\tY,  Z)$ satisfies the BSDE
  \begin{align*}
    \tY^1 = -1  - \frac{1}{2} \ict
    \left(\Gamma_{11}^1(\tY) ( Z^1)^2 +
    2\Gamma_{12}^1(\tY)  Z^1  Z^2 +
    \Gamma_{22}^1(\tY) (Z^2)^2\right)\, dt - \ict  Z^1\, dB \\
    \tY^2 =  - \frac{1}{2} \ict
    \left(\Gamma_{11}^2(\tY) (Z^1)^2 +
    2\Gamma_{12}^2(\tY)  Z^1  Z^2 +
    \Gamma_{22}^2(\tY) (Z^2)^2\right)\, dt - \ict  Z^2\, dB.
  \end{align*}
  We set $Y = \tY + (1,0)$ -  noting that $Y_T=0$, but that $Y$ itself is not
  trivial - and define the matrix processes $A$ by
  \renewcommand*{\arraystretch}{1.7}
  \begin{align*}
    A = - \frac{1}{2} \begin{pmatrix}
      \Gamma_{11}^1(Y)  Z^1 + 2 \Gamma_{12}^1(Y)  Z^2 &
      \Gamma_{22}^1(Y)  Z^2                             \\
      \Gamma_{11}^2(Y)  Z^1 + 2\Gamma_{12}^2(Y)  Z^2  &
      \Gamma_{22}^2(Y)  Z^2
    \end{pmatrix}
  \end{align*}
  so that $(Y, Z)$ solves \eqref{non-exist-bsde}.
  \smallskip

  To show that the coefficients of $A$ and $Z$ are, indeed, in $\bmo$, we resort
  to explicit computation:
  \begin{align*}
    dY_t^1 & = d\tY^1_t =
    - \sin(M_t) U_t\, dB_t - \frac{1}{2} \cos(M_t) \abs{U_t}^2\, dt, \\
    dY_t^2 & = d\tY^2_t=
    \hspace{0.9em}\cos(M_t) U_t \,dB_t - \frac{1}{2} \sin(M_t) \abs{U_t}^2\, dt
  \end{align*}
  and so $ Z_t^1 = -\sin(M_t) U_t$, $ Z_t^2 = \cos(M_t) U_t$. Since $U \in
    \bmo(\R)$, it follows that $ Z_t^1,  Z_t^2 \in \bmo(\R)$, and, hence, that
    $A \in \bmo$, too.
\end{example}

The phenomenon brought forward in Example \ref{exa:non-exist} is exclusively a
multidimensional one. The change-of-measure techniques available in dimension
$1$ lead to the following well-known results:

\begin{proposition}\label{pro:1d-lin-bsde} Assume that $n=1$. Given $\kA\in \bmo$
  we consider the scalar BSDE
  \begin{align}
    \label{1d-lin-bsde}
    Y = \xi+ \ict \Big( \kA \kZ + \beta\Big)\, dt - \ict \kZ\, d\kB.
  \end{align}
  \begin{enumerate}
    \item If $\xi \in \linf(\R)$, then there exists a unique solution $(Y,\kZ)
    \in \sibm$ to \eqref{1d-lin-bsde} and it satisfies
    \begin{align*}
      \norm{Y}_{\sinf} + \norm{\kZ}_{\bmo} \leqc
      \norm{\xi}_{\linf} + \norm{\beta}_{\bmoh},\ \const{\norm{\kA}_{\bmo}}
    \end{align*}
    \item There exists a universal constant $q^* = q^*(\norm{\kA}_{\bmo})\geq 1$
    with the following property: for any $q>q^*$ and any $\xi \in \lque$, there
    exists a unique solution $(Y,\kZ) \in \sque \times \ltq$ to
    \eqref{1d-lin-bsde} and it satisfies
    \begin{align*}
      \norm{Y}_{\sque} +\norm{\kZ}_{\ltq} \leqc \norm{\xi}_{\lque} +
      \norm{\beta}_{\loq}, \ \const{q, \norm{\kA}_{\bmo}}.
    \end{align*}
  \end{enumerate}
\end{proposition}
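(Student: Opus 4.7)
The plan is to reduce to Proposition \ref{pro:model-1} by a Girsanov change of measure. Since $n=1$ and $\kA\in\bmo$, the martingale $M:=\int\kA\,d\kB$ lies in $\BMO$, so $\mathcal{E}(M)$ is a positive uniformly integrable martingale, and we define $\bQ\sim\bP$ by $d\bQ/d\bP = \mathcal{E}(M)_T$. Girsanov's theorem produces a $\bQ$-Brownian motion $\tilde{\kB} := \kB - \int\kA\,dt$, under which \eqref{1d-lin-bsde} rewrites as
\begin{align*}
Y = \xi + \int_{\cdot}^T \beta\,dt - \int_{\cdot}^T \kZ\,d\tilde{\kB},
\end{align*}
which is exactly the martingale-representation type equation handled by Proposition \ref{pro:model-1} (applied with $\bQ$ in place of $\bP$ and $\tilde{\kB}$ in place of $\kB$). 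Existence is then obtained by defining $Y_t := \bE^{\bQ}_t\bigl[\xi + \int_t^T\beta\,ds\bigr]$ and applying the $\tilde{\kB}$-martingale representation theorem under $\bQ$ to extract $\kZ$. Uniqueness is immediate once the a-priori estimates below are established, since the difference of two solutions satisfies the same transformed equation with zero data.

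Everything therefore reduces to transferring the estimates of Proposition \ref{pro:model-1} between $\bP$ and $\bQ$, and this is precisely the step where the hypothesis $n=1$ is essential, via the scalar $\BMO$ theory of Kazamaki. For part~(1), the $\sinf$-norm is preserved under equivalence of measures, while Kazamaki's stability theorem for $\BMO$ under exponential changes of measure yields $\|\cdot\|_{\bmo(\bP)}\asymp\|\cdot\|_{\bmo(\bQ)}$ and $\|\cdot\|_{\bmoh(\bP)}\asymp\|\cdot\|_{\bmoh(\bQ)}$, with equivalence constants depending only on $\|\kA\|_{\bmo}$. Feeding this into Proposition \ref{pro:model-1}(1) applied under $\bQ$ gives the stated bound.

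For part~(2), we invoke the reverse H\"older inequality $\rp$ for $\mathcal{E}(M)$ (Theorem 3.4 of \cite{Kazamaki}), which supplies some $p^* = p^*(\|\kA\|_{\bmo})>1$ such that $\mathcal{E}(M)$ satisfies $\rp$ for every $p\in(1,p^*)$. A symmetric argument applied to $-\kA$ and $\tilde{\kB}$ yields a reverse H\"older estimate for the reverse density $d\bP/d\bQ = \mathcal{E}(-\int\kA\,d\tilde{\kB})_T$ under $\bQ$, again with exponent depending only on $\|\kA\|_{\bmo}$. Setting $q^* := p^*/(p^*-1)$, for any $q>q^*$ one chooses conjugate exponents $p,p'$ with $p\in(1,p^*)$ and $q_1 := q/p' > 1$; H\"older's inequality combined with reverse H\"older converts the $\lque(\bP)$ and $\loq(\bP)$ norms of $\xi$ and $\beta$ into $L^{q_1}(\bQ)$ and $L^{1,q_1}(\bQ)$ norms, Proposition \ref{pro:model-1}(2) at exponent $q_1$ under $\bQ$ controls $(Y,\kZ)$ in the corresponding $\bQ$-norms, and a second H\"older step using the reverse H\"older bound for $d\bP/d\bQ$ transfers the result back to $\sque(\bP)$ and $\ltq(\bP)$. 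The main obstacle is precisely this bookkeeping with conjugate exponents: ensuring that $p$ is close enough to $p^*$ so that the two H\"older steps bring the final exponent back to $q$ on the left-hand side is what forces the threshold $q^*$ to depend on $\|\kA\|_{\bmo}$.
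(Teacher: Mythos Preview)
Your overall strategy --- Girsanov with $d\bQ/d\bP=\mathcal{E}(\int\kA\,d\kB)_T$, rewrite as a driftless equation under $\bQ$, then invoke the Kazamaki theory of $\BMO$ exponentials --- is exactly the one the paper sketches (it simply cites Briand--Elie for the details). Part~(1) is fine: $\sinf$ is measure--invariant, $\bmo$ transfers by Kazamaki's stability theorem, and the $\bmoh$ transfer follows from reverse H\"older combined with the energy inequality $\bE_\tau[(\int_\tau^T|\beta|\,ds)^{p'}]\leq C_{p'}\norm{\beta}_{\bmoh}^{p'}$.

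The bookkeeping you describe for part~(2), however, does not close. Going $\bP\to\bQ$ by an unconditional H\"older step with exponent $p'$ sends $L^q(\bP)$ data to $L^{q_1}(\bQ)$ with $q_1=q/p'<q$; after applying Proposition~\ref{pro:model-1}(2) at level $q_1$ under $\bQ$, a second unconditional H\"older step with the reverse density can only return you to $L^{q_1/\tilde p'}(\bP)$, which is strictly below $L^q(\bP)$ regardless of how close $p$ is to $p^*$. Two global H\"older transfers always lose integrability; there is no choice of exponents that ``brings the final exponent back to $q$''.

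The fix is to use the \emph{conditional} form of reverse H\"older directly under $\bP$, bypassing Proposition~\ref{pro:model-1}(2) under $\bQ$. From $|Y_t|\leq\bE^{\bQ}_t[|\xi|+\int_t^T|\beta|\,ds]=\bE^{\bP}_t\bigl[\tfrac{S_T}{S_t}\,W\bigr]$ with $W=|\xi|+\int_0^T|\beta|\,ds$, conditional H\"older plus $\rp$ give $|Y_t|^{p'}\leq C\,\bE^{\bP}_t[W^{p'}]$. Now the right--hand side is a $\bP$--martingale, so Doob's maximal inequality at exponent $q/p'>1$ (available precisely when $q>p'>q^*$) yields $\norm{Y}_{\sque(\bP)}\leqc\norm{W}_{\lque(\bP)}$, and the $\kZ$--bound follows from Proposition~\ref{pro:model-2}(2). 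This is the computation in Briand--Elie that the paper defers to.
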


\begin{proof}
The proof uses a standard change-of-measure argument, and we provide only a sketch. The idea is to rewrite \eqref{1d-lin-bsde} as
\begin{align*}
    Y = \xi + \int_{\cdot}^T \beta \, dt - \int_t^T \kZ d \kB^{\kA}, \, \, \kB^{\kA} = \kB - \int \kA dt,  
\end{align*}
and notice that $\kB^{\kA}$ is a martingale under the measure $\bQ$, where $\frac{d\bQ}{d\bP} = \mathcal{E}(\int \kA d\kB)$. Standard facts about exponentials of $\BMO$ martingales can then be applied to give the desired results. We refer to the proofs of Propositions 2.2 and 2.3 of \cite{Briand-Elie} for more details. 
\end{proof}

\subsection{A-priori estimates under triangularity and sliceability}
The message of the previous section is that that an additional assumption on the
coefficients - beyond membership in $\bmo$ - will be necessary for good a-priori
estimates in higher dimensions. The goal of this section is to present such an
additional assumption.

A general linear BSDE studied in this section will take the following form:
\begin{align} \label{lin-bsde}
  Y = \xi + \ict \Big(\alpha Y + \kA \kZ + \beta\Big)\, dt - \ict \kZ\, d\kB,
\end{align}
where $Y$ is a $\R^n$-valued and $\kZ$ is $(\R^d)^n$-valued. The coefficients
$\alpha, \kA$ and $\beta$ are $\R^{n\times n}$-, $(\R^d)^{n\times n}$- and
$\R^n$ valued, respectively, while $\xi$ is an $\R^n$-valued random vector. If
one insisted on including all $n$-dimensional indices, \eqref{lin-bsde} would be
written as:
\begin{align*}
  Y^i = \xi^i + \ict \Big(\alpha^i_j Y^j
  + \kA^i_j \cdot \kZ^j + \beta^i\Big)\, dt - \ict \kZ^i\cdot d\kB,\ 1\leq i \leq n.
\end{align*}

In general, a pair $(Y,\kZ)\in \szer\times\ltz$ is said to be a
\define{solution} to \eqref{lin-bsde} if $\int_{0}^{\cdot}\kZ\, d\kB$ is a
martingale and \eqref{lin-bsde} holds pathwise, a.s. When $(Y,\kZ)$ admits more
regularity, e.g., when $Y\in\sinf$ and $\kZ\in\bmo$, we say that $(Y,\kZ)$ is an
\define{$\sibm$-solution}. 

\medskip

We begin our analysis by abstracting the key property, and then providing
sufficient conditions in terms of two qualitatively different requirements. 

\begin{definition}
  For $(\alpha, \kA) \in \bmoh \times \bmo$, we say that the \define{BSDE($\alpha,\kA$) is well-posed} if
  for each pair $(\xi,\beta)\in\linf\times\bmoh$
  \begin{itemize}
    \item the BSDE \eqref{lin-bsde} admits a unique $\sibm$-solution  $(Y,\kZ)$
    and
    \item there exists a universal constant $\const{\alpha, \kA}$ such that 
    \begin{align*}
      \norm{Y}_{\sinf} + \norm{\kZ}_{\bmo} \leqc \norm{\xi}_{\linf} + \norm{\beta}_{\bmoh}.
    \end{align*}
  \end{itemize} 
  We say that \define{$\bsdea$ is well-posed} if BSDE($0, \kA$) is well-posed.
\end{definition}

If $\bsdeaa$ is well-posed, then there is a bounded solution operator
\begin{align*}
  \Saa: \linf\times\bmoh \to \sibm,\ \Saa (\xi,\beta) = (Y,\kZ)
\end{align*}
whose operator norm is  denoted by $\opnorm{\Saa}$. For $\alpha=0$ we write
$\Sa$ instead of $S_{0,\kA}$. When BSDE($\alpha,\kA$) is not well-posed, we
set $\opnorm{\Saa}=+\infty$. 

Proposition \ref{pro:1d-lin-bsde} states that when $n = 1$, $\bsdea$ is well-posed for any $\kA\in\bmo$. Example \ref{exa:non-exist} above, however, implies that this is
no longer the case in higher dimensions. The following proposition gives a
simple, but far reaching, criterion for well-posedness in any dimension. Since it
will be used in the proof  and beyond, we note that the conditional
Cauchy-Schwarz inequality implies that for all algebraically compatible $\gamma,
\rho \in \bmo$ we have
\begin{align}
  \label{bmo-holder}
  \norm{\gamma \rho}_{\bmoh} \leqc
  \norm{\gamma}_{\bmo} \norm{\rho}_{\bmo}.
\end{align}
\begin{proposition}\label{pro:lem:tri-dif-exc} Suppose that $\kA$ is lower
  triangular, i.e., that $\kA^i_j = \boldsymbol{0}$ for all $j>i$. Then $\bsdea$ is
  well-posed and 
  \begin{align*}
    \opnorm{\Sa} \leq C, \, \, C = C(\norm{\kA}_{\bmo}) .
  \end{align*}
\end{proposition}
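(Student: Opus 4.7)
The plan is to exploit the lower triangular structure to reduce \eqref{lin-bsde} to a cascade of $n$ one-dimensional linear BSDEs, each of which can be handled by Proposition \ref{pro:1d-lin-bsde}, and to pass information up the cascade using the bmo H\"older inequality \eqref{bmo-holder}. Writing \eqref{lin-bsde} componentwise under the assumption that $\kA^i_j = \boldsymbol{0}$ for $j>i$, we obtain, for $1 \leq i \leq n$,
\begin{align*}
  Y^i = \xi^i + \ict \Big(\kA^i_i \cdot \kZ^i + \sum_{j<i} \kA^i_j \cdot \kZ^j + \beta^i\Big)\, dt - \ict \kZ^i \cdot d\kB,
\end{align*}
where the terms involving $\kZ^j$ for $j<i$ are treated as known inputs once $\kZ^1,\dots,\kZ^{i-1}$ have been constructed.

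I would proceed by induction on $i$. For the base case $i=1$, the equation for $(Y^1,\kZ^1)$ is a one-dimensional linear BSDE with coefficient $\kA^1_1 \in \bmo$, inhomogeneity $\beta^1 \in \bmoh$, and terminal value $\xi^1 \in \linf$. By Proposition \ref{pro:1d-lin-bsde}(1) there exists a unique solution $(Y^1,\kZ^1) \in \sinf \times \bmo$ with
\begin{align*}
  \norm{Y^1}_{\sinf} + \norm{\kZ^1}_{\bmo} \leqc \norm{\xi^1}_{\linf} + \norm{\beta^1}_{\bmoh}, \ \const{\norm{\kA^1_1}_{\bmo}}.
\end{align*}
For the inductive step, suppose $(Y^j,\kZ^j) \in \sinf \times \bmo$ have been constructed for $j<i$ together with the corresponding bounds. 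Set $\tilde\beta^i = \beta^i + \sum_{j<i} \kA^i_j \cdot \kZ^j$; by \eqref{bmo-holder} each term $\kA^i_j \cdot \kZ^j$ lies in $\bmoh$ with norm controlled by $\norm{\kA^i_j}_{\bmo}\norm{\kZ^j}_{\bmo}$, so $\tilde\beta^i \in \bmoh$ with norm bounded by the previously constructed data. Another application of Proposition \ref{pro:1d-lin-bsde}(1) to the one-dimensional BSDE
\begin{align*}
  Y^i = \xi^i + \ict \Big(\kA^i_i \cdot \kZ^i + \tilde\beta^i\Big)\, dt - \ict \kZ^i \cdot d\kB
\end{align*}
then yields a unique $(Y^i,\kZ^i) \in \sinf\times\bmo$ with the appropriate estimate.

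Stacking the components recovers an $\sibm$-solution of \eqref{lin-bsde}, and iterating the a priori estimates $n$ times produces a bound of the required form $\norm{Y}_{\sinf} + \norm{\kZ}_{\bmo} \leqc \norm{\xi}_{\linf} + \norm{\beta}_{\bmoh}$ with constant depending only on $\norm{\kA}_{\bmo}$ (and the fixed dimensions). For uniqueness, given two $\sibm$-solutions, the difference satisfies the same triangular system with zero data, and uniqueness componentwise -- again via Proposition \ref{pro:1d-lin-bsde}(1) applied row by row to the difference -- propagates from $i=1$ upward, forcing the difference to vanish.

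The only real bookkeeping issue is that at row $i$ the ``source'' $\tilde\beta^i$ depends on the previously constructed $\kZ^j$'s, so the final constant in the estimate is obtained by iterating the one-dimensional estimates and is thus a function of the full matrix norm $\norm{\kA}_{\bmo}$ rather than just of the diagonal entries; this is routine but worth spelling out. No substantive obstacle is expected beyond this compounding of constants, since the triangular structure completely decouples the construction into scalar problems already covered by Proposition \ref{pro:1d-lin-bsde}.
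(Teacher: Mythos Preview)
Your proposal is correct and follows essentially the same route as the paper's proof: both exploit the lower-triangular structure to solve the system row by row, applying Proposition~\ref{pro:1d-lin-bsde}(1) at each step and using \eqref{bmo-holder} to absorb the lower-row terms $\kA^i_j\cdot\kZ^j$ into the inhomogeneity $\bmoh$. The paper writes out rows $1$ and $2$ explicitly and then says ``continuing in this manner,'' which is exactly your induction.
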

\begin{proof} We pick a lower-triangular $\kA$ and set $\alpha=0$ so that the
  first row of \eqref{lin-bsde} reads
  \begin{align} \label{first-row}
    Y^1 = \xi^1 + \ict \left(\kA^{1}_{1} \cdot \kZ^1 + \beta^1\right)\, dt
    - \ict \kZ^1\cdot d\kB.
  \end{align}
  By Proposition \ref{pro:1d-lin-bsde}, the scalar BSDE \eqref{first-row} has a
  unique solution $(Y^1, \kZ^1) \in \sinf(\R) \times \bmo(\R^d)$ and it
  satisfies
  \begin{align} \label{first-row-est}
    \norm{Y^1}_{\sinf} + \norm{\kZ^1}_{\bmo} \leqc \norm{\xi^1}_{\linf}
    + \norm{\beta^1}_{\bmoh}
    \leq \norm{\xi}_{\linf} + \norm{\beta}_{\bmoh}, \, \, C = C(||\kA||_{\bmo}). 
  \end{align}
  Now that $(Y^1, \kZ_1^1)$ is uniquely determined by the first line of the
  equation, the second line reads
  \begin{equation} \label{second-row}
    \begin{split}
      Y^2
      & = \xi^2 + \ict \Big( \kA^2_2\cdot \kZ^2 +
      \kA^2_1\cdot  \kZ^1 + \beta^2 \Big)\, ds - \ict \kZ^2\, d\kB \\
      & = \xi^2 + \ict \Big( \kA^2_2\cdot \kZ^2 + \hat{\beta^2}
      \Big)\, ds - \ict \kZ^2\, d\kB
    \end{split}
  \end{equation}
  where  $\hat{\beta^2} = \kA^2_1\cdot  \kZ^1 + \beta^2 $.  The inequality
  \eqref{bmo-holder} implies that
  \begin{equation*} 
    \begin{split}
      \norm*{\hat{\beta}^2}_{\bmoh} &\leq \norm{\beta^2}_{\bmoh}
      +\norm{\kA_1^2\cdot \kZ^1}_{\bmoh} \\
      &\leq \norm{\beta^2}_{\bmoh} + \norm{\kA_1^2}_{\bmo} \norm{\kZ^1}_{\bmo}
      \leqc \norm{\beta}_{\bmoh} + \norm{\xi}_{\linf},
    \end{split}
  \end{equation*}
  where $\const{\norm{\kA}_{\bmo}}$ and the last inequality follows from
  \eqref{first-row-est}. Thus, by Proposition \ref{pro:1d-lin-bsde}, there is a
  unique solution $(Y^2, \kZ^2) \in \sibm$  to \eqref{second-row} satisfying
  \begin{align*}
    \norm{Y^2}_{\sinf} + \norm{\kZ^2}_{\bmo} \leqc \norm{\xi}_{\linf} +
    \norm{\beta}_{\bmoh},\ \const{\norm{\kA}_{\bmo}}.
  \end{align*}
  Continuing in this manner, we produce a solution $(Y,\kZ) \in \sibm$ to
  \eqref{lin-bsde} satisfying the stated bounds. Thanks to \ref{pro:1d-lin-bsde},
  this solution is also unique, so we conclude that $\bsdea$ is, indeed, well-posed.
\end{proof}
Sufficient sliceability - as introduced in the following definition - will play
the role of smallness in our main well-posedness criterion below.
\begin{definition}[Sliceability] \
  \begin{enumerate}
    \item A \define{random partition} of $[0,T]$ is a collection $\ption$  of
    stopping times such that $0=\tau_0 \leq \tau_1 \leq \dots \leq \tau_m=T$.
    The set of all random partitions is denoted by $\sP$.

    \item   For $\kA\in\bmo$, the \define{index of sliceability for $\kA$} is
    the function $N_{\kA}:(0,\infty)\to \N\cup\set{\infty}$ defined as follows.
    For $\delta>0$, $N_{\kA}(\delta)$ is the smallest natural number $m$ such
    that there exists a random partition $\ption \in \sP$ such that
    \begin{align}
      \label{slic}
      \norm{\kA \itauk}_{\bmo} \leq \delta \eforall 1\leq k \leq m.
    \end{align}
    If no such $m$ exists, we set $N_{\kA}(\delta)=\infty$.

    \item A bmo-process $\kA$ is said to be \define{$\delta$-sliceable} if
    $N_{\kA}(\delta)<\infty$ and \define{sliceable} if it is $\delta$-sliceable
    for each $\delta>0$. 

    \item A family $\sA \subseteq \bmo$ is said to be \define{uniformly
    sliceable} if
    \begin{align*}
      \sup_{\kA\in \sA} N_{\kA}(\delta)<\infty \eforall \delta>0.
    \end{align*}
  \end{enumerate}
  Sliceability and the related notions given above are defined for the space
  $\bmoh$ in the same way.
\end{definition}
\begin{remark} \label{rem:slic} It is well known that not every $\bmo$ process
  is sliceable, even in dimension $1$ (see \cite[Example 3.1, p. 349]{Sch96}).
  One of the simplest ways to ascertain sliceability  of $\gamma\in\bmo$ is to
  show that it is bounded or that $\abs{\gamma}^p \in \bmo$ for some $p>1$. For
  a bounded $\gamma$, we further have the following simple estimate which will
  be useful in the sequel:
  \begin{align}\label{bd-slic}
    N_{\gamma}(\delta) \leq 1+\norm{\gamma}_{\lii}/\delta 
    \leq 1+\norm{\gamma}_{\bmo}/\delta
  \end{align}
  More generally, it is enough to construct a nondecreasing function
  $\vp:[0,\infty) \to [0,\infty)$ such that $\vp$ is convex and
  $\lim_{x\to\infty} \tfrac{\vp(x)}{x} = \infty$ with the property that
  $\sqrt{\vp(\abs{\gamma}^2)} \in \bmo$. Indeed, for $0\leq a < b\leq T$, by the
  conditional Jensen's inequality we have
  \begin{align*}
    \vp\left( \oo{b - a}
    \bE_{a} \left[   \int_{a}^{b} \abs{\gamma}^2\, dt \right] \right)
     & \leq
    \bE_{s} \left[  \oo{b-a} \int_{a}^{b} \vp( \abs{\gamma}^2)\, dt\right] 
      \leq \oo{b - a} \norm{\sqrt{\vp(\abs{\gamma}^2)}}^2_{\bmo}.
  \end{align*}
  Therefore, $\bE_{a} \int_{a}^{b} \abs{\gamma}^2\, dt$ can be made uniformly
  arbitrarily small by making $b-a$ small enough. Hence, given any $\delta>0$, a
  fine-enough deterministic partition can be used show that $\gamma$ is
  $\delta$-sliceable. In particular, $N_{\gamma}(\delta) \leq C$, where
  $\const{\delta, \norm{\gamma}_{\bmo}, \vp}$. 
\end{remark}

 Our next result, Theorem \ref{thm:exc-neigh} shows that if
 BSDE($\alpha,\kA$) is well-posed, its ``neighborhood" - measured by sliceability - is
 also well-posed.

\begin{theorem}\label{thm:exc-neigh} Suppose that $(\alpha,\kA)\in\bmo$, and $\bsdeaa$ is well-posed. Then there exists a constant $\delta = \delta(\opnorm{\Sa})>0$ such
  that $\text{BSDE}\,(\alpha+\Delta\alpha, \kA+\Delta \kA)$ is well-posed as soon as both
  $\Delta \alpha$ and $\Delta \kA$ are   $\delta$-sliceable. Moreover
  \begin{align}\label{norm-sakd} 
    \opnorm{S_{\alpha+\Delta \alpha, \kA+\Delta \kA}} \leqc 
     N_{\Delta \alpha}(\delta)+N_{\Delta \kA}(\delta).
  \end{align}
\end{theorem}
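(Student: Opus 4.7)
The strategy is a perturbation argument: I treat $\Delta\alpha$ and $\Delta\kA$ as corrections to the well-posed $\bsdeaa$, absorbing them one slice at a time. Setting $\lambda = \opnorm{\Saa}$, I first choose $\delta = \delta(\lambda) > 0$ such that $2\delta\lambda \le \tfrac12$, and then use the sliceability hypothesis -- taking a common refinement of the two witnessing partitions -- to obtain a single $\ption \in \sP$ with $m \le N_{\Delta\alpha}(\delta) + N_{\Delta\kA}(\delta)$ for which both $\norm{\Delta\alpha\itauk}_{\bmoh}\le\delta$ and $\norm{\Delta\kA\itauk}_{\bmo}\le\delta$ hold for every $k$.

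The preparatory (and most delicate) step is to upgrade well-posedness of $\bsdeaa$ from $[0,T]$ to an arbitrary stochastic subinterval $[\sigma,\sigma']$: given $\eta \in \linf(\sF_{\sigma'})$ and $\beta'$ that is $\bmoh$ on $[\sigma,\sigma']$, the subinterval BSDE should admit a unique solution of analogous $\sibm$-type, with norm controlled by $C(\lambda,\norm{\alpha}_{\bmoh})(\norm{\eta}_{\linf}+\norm{\beta'}_{\bmoh})$. The idea is to extend the data to $[0,T]$ by taking $\tilde\xi = \eta$ and $\tilde\beta = \beta'\ind{[\sigma,\sigma']} - \alpha\eta\,\ind{[\sigma',T]}$ -- both members of the right spaces because $\alpha \in \bmoh$ and $\eta$ is bounded -- solve the well-posed $\bsdeaa$ on $[0,T]$ for $(Y^*,\kZ^*)$, and verify that $(Y^*,\kZ^*) = (\eta,0)$ on $[\sigma',T]$. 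This last equality is designed into the choice of $\tilde\beta$: the pair $(\eta,0)$ formally satisfies the full-interval equation restricted to $[\sigma',T]$, so any suitable uniqueness on that subinterval pins down the solution. Restricting $(Y^*,\kZ^*)$ to $[\sigma,\sigma']$ then yields the subinterval solution, and subinterval uniqueness follows by running the same extension on the difference of two candidates.

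With subinterval well-posedness available, I attack each slice $[\tau_{k-1},\tau_k]$ separately by a Banach fixed-point: the map sending $(\hat Y,\hat\kZ)$ to the subinterval $\bsdeaa$-solution with terminal $Y_{\tau_k}$ and driver $\beta + \Delta\alpha\hat Y + \Delta\kA\hat\kZ$ has Lipschitz constant at most $\lambda(\norm{\Delta\alpha\itauk}_{\bmoh}+\norm{\Delta\kA\itauk}_{\bmo})\le 2\delta\lambda \le \tfrac12$ by \eqref{bmo-holder}. Iterating backward from $\tau_m = T$ with terminal $\xi$ produces a unique solution on $[0,T]$, establishing well-posedness of $\text{BSDE}\,(\alpha+\Delta\alpha,\kA+\Delta\kA)$, and the bound \eqref{norm-sakd} is read off by chaining the per-slice estimates across the $m \le N_{\Delta\alpha}(\delta)+N_{\Delta\kA}(\delta)$ slices. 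The main obstacle is the subinterval well-posedness step: one must rigorously eliminate solutions other than $(\eta,0)$ of the homogeneous linear equation on $[\sigma',T]$, either via an $\sibm$-energy estimate combined with an additional slicing of $\alpha$ and $\kA$, or via a more careful extension-uniqueness bootstrap that avoids circular reliance on the very subinterval uniqueness being proved; once that is settled, the remaining pieces are a standard Picard iteration.
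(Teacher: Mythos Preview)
Your strategy is the paper's: choose $\delta$ so that $2\delta\,\opnorm{\Saa}\le\tfrac12$, take a common refinement of the two witnessing partitions (of total size at most $N_{\Delta\alpha}(\delta)+N_{\Delta\kA}(\delta)$), run a Banach fixed point on each slice, and paste the pieces backward from $\tau_m=T$, reading off \eqref{norm-sakd} by chaining the per-slice bounds. The only organizational difference is that the paper does \emph{not} isolate a separate subinterval-well-posedness lemma. Instead it defines the per-slice contraction directly through the full-interval solution operator,
\[
\Phi^{\tau,\tau',\xi}(R,\kV)\;=\;\Saa\bigl(\xi,\,(\Delta\alpha\,R+\Delta\kA\,\kV+\beta)\,\ind{[\tau,\tau']}\bigr),
\]
and then simply records (``noting that $\Phi^{\tau,\tau',\xi}(R,\kV)=(\xi,0)$ on $(\tau',T]$'') the restriction property needed to make the fixed points agree at the partition points. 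Your more careful treatment of that step---the $-\alpha\eta\,\ind{[\sigma',T]}$ correction and the extension-uniqueness bootstrap---is more than the paper invests; the paper takes this localization as evident and moves on. Once that point is granted, the contractivity estimate via \eqref{bmo-holder}, the backward iteration, and the norm chaining are identical in both versions.
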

\begin{proof} We fix $\Delta \alpha\in\bmoh$ and $\Delta \kA\in\bmo$. For
  $(R,\kV)\in \sibm$, stopping times $0\leq \tau \leq \tau'\leq T$ and a random
  variable $\xi \in \linf(\sF_{\tau'})$ we define
  \begin{align*}
    \Ptx (R,\kV) = \Saa\Big(\xi, (\Delta \alpha\, R 
    + \Delta\kA\, \kV + \beta) \itaup \Big).
  \end{align*}
  noting that $\Ptx(R,\kZ) = (\xi,0)$ on $(\tau',T]$.

  To estimate the contractivity of $\Ptx$ we assume that $\Ptx(R^k,\kV^k) =
  (\Yk, \kZk)$, $k=1,2$ and set $\Delta Y = Y^1 - Y^2$, $\Delta R = R^2-R^1$,
  $\Delta \kZ = \kZ^1 - \kZ^2$, and $\Delta \kV = \kV^1 - \kV^2$, so that
  $(\Delta Y, \Delta \kZ)$ solves the  BSDE
  \begin{align*}
    (\Delta Y, \Delta \kZ) = \Saa\Big(0, (\Delta\kA \Delta \kZ 
    + \Delta \alpha \Delta R) \itaup\Big).
  \end{align*}
  Therefore, 
  \begin{align*}
    \norm{\Delta Y}_{\sinf} + \norm{\Delta \kZ}_{\bmo}
     \leq \hspace{-8em}&\hspace{8em} \opnorm{\Saa} 
     \norm*{\Delta \kA\cdot \Delta \kV \itaup + 
     \Delta\alpha \Delta R \itaup}_{\bmoh} \\
     & \leqc \opnorm{\Saa}\norm*{\Delta \kA \itaup}_{\bmo}
      \norm{\Delta \kV}_{\bmo}
    + \norm{\Delta\alpha \itaup }_{\bmoh} \norm{\Delta R}_{\sinf}\\
     & \leqc \opnorm{\Saa}\big( \norm*{\Delta \kA\itaup}_{\bmo} + 
     \norm*{\Delta\alpha_{\itaup}}_{\bmoh} \big)
    \big(\norm{\Delta R}_{\sinf} + \norm{\Delta \kV}_{\bmo}\big).
  \end{align*}
  It follows that there exists a universal constant $\delta =
  \delta(\opnorm{\Sa})$ such that $\Ptx$ is $\tot$-Lipschitz on $\sibm$ as soon
  as $\max(\norm{\Delta A\itaup}_{\bmo}, \norm{\Delta\alpha\itaup}_{\bmoh})\leq
  \delta$. Having fixed such a constant $\delta$  we assume that both
  $\Delta\alpha$ and $\Delta \kA$ are $\delta$-sliceable, as witnessed by the
  partition $\ption$. We note that a single partition of size at most
  $N_{\kA}(\delta) + N_{\alpha}(\delta)$ can always be chosen to work for both
  processes.

  Under these sliceability conditions, $\Phi^{\tau_{m-1},T,\xi}$ is
  $\tot$-Lipschitz on $\sinf\times \bmo$, and, as such, admits a fixed point
  $(\upn{Y}{m}, \upn{\kZ}{m})$. By the definition of $\Phi^{\tau_{m-1},T,\xi}$,
  the pair $(\upn{Y}{m}, \upn{\kZ}{m})$ solves the equation
  \begin{align} \label{delta-bsde}
    Y = \xi+ \ict \Big((\alpha+\Delta \alpha) Y + (\kA+\Delta \kA) 
    \cdot  \kZ+\beta\Big)\, dt
    - \ict  \kZ\, d\kB.
  \end{align}
  on $[\tau_{m-1}, T]$.

  Next, let $(\upn{Y}{m-1}, \upn{\kZ}{m-1})$ denote unique fixed point of
  $\Phi^{\tau_{m-2}, \tau_{m-1}, \xi^{m-1}}$ where we use $\xi^{m-1} =
  \upn{Y}{m}_{\tau_{m-1}}$ as the terminal condition.  As above, $(\upn{Y}{m-1},
  \upn{\kZ}{m-1})$  solves \eqref{delta-bsde} on $[\tau_{m-2},\tau_{m-1}]$ and
  $\upn{Y}{m-1}$ agrees with $\upn{Y}{m}$ at $\tau_{m-1}$. Continuing in this
  manner, we construct the following solution to \eqref{delta-bsde}
  \begin{align*}
    Y = \sum_{k=1}^m  \upn{Y}{k}\itauko + \xi \ind{\set{T}}, 
    \quad  \kZ = \sum_{k=1}^m  \upn{\kZ}{k} \itauko.
  \end{align*}

  \medskip

  The universal bounds are established step by step, as well. We pick a solution
  $(Y,\kZ)\in \sibm$. Suitably modified on $[0,\tau_{m-1})$ it is a fixed point
  of the map $\Phi^{\tau_{m-1},T,\xi}$. Since $\Phi^{\tau_{m-1},T,\xi}$  is
  $\tot$-Lipschitz, the distance between the fixed point of
  $\Phi^{\tau_{m-1},T,\xi}$ and its value $(Y^0,
  \kZ^0)=\Phi^{\tau_{m-1},T,\xi}(0,0)$ is at most
  $\norm{Y^0}_{\sinf}+\norm{\kZ^0}_{\bmo}$. Therefore,
  \begin{align*}
    \norm{Y \itauT}_{\sinf} + \norm{\kZ \itauT}_{\bmo} 
    &\leq 2 \Big( \norm{Y^0}_{\sinf}+\norm{\kZ^0}_{\bmo}\Big)\\
    &\leqc \norm{\xi}_{\linf} + \norm{\beta}_{\bmoh}.
  \end{align*}
  We continue in the same manner, noticing that $(Y,\kZ)$ can be modified on
    $[0,T] \setminus [\tau_{m-2}, \tau_{m-1}]$ to become a fixed point of
    $\Phi^{\tau_{m-1}, \tau_m, Y_{\tau_{m-1}}}$.  Same as above, we conclude
    that
  \begin{align*}
    \norm{Y \ind{[\tau_{m-2},\tau_{m-1}]}}_{\sinf} + 
    \norm{\kZ \ind{[\tau_{m-2},\tau_{m-1}]}}_{\bmo}
    \leqc \norm{Y_{\tau_{m-1}}}_{\linf} + \norm{\beta}_{\bmoh}.
  \end{align*}
  Continuing this way, we obtain \eqref{norm-sakd} - which, in turn, implies
  uniqueness - after $m \leq N_{\Delta \kA}(\delta) + N_{\alpha}(\delta)$ steps.
\end{proof}
Proposition \ref{pro:lem:tri-dif-exc} and various sufficient conditions for
sliceability from Remark \ref{rem:slic} lead to the following consequences of
Theorem \ref{thm:exc-neigh} above:
\begin{corollary} \label{cor:one-slic-exc} Let $\kA\in\bmo$ and
$\alpha\in\bmoh$. 
  \begin{enumerate}
    \item If $\kA$ and $\alpha$ are sliceable then $\bsdeaa$ is well-posed.
    \item If $\bsdea$ is well-posed and $\alpha$ is sliceable, $\bsdeaa$ is well-posed. 
    \item If $\bsdea$ is well-posed and $\alpha$ is bounded then 
    \begin{align*}
      \opnorm{\Saa} \leq C,\ \const{\opnorm{\Sa}, \norm{\alpha}_{\lii}}.
    \end{align*}
  \end{enumerate}
\end{corollary}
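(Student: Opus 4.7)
The plan is to view each part as a single application of Theorem \ref{thm:exc-neigh}, choosing in each part a base pair $(\alpha_0,\kA_0)$ for which well-posedness is already known, together with a perturbation $(\Delta\alpha,\Delta\kA)$ so that $(\alpha_0+\Delta\alpha,\kA_0+\Delta\kA)=(\alpha,\kA)$.

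For (1), I would take the trivial base $(0,0)$. Existence for $\text{BSDE}(0,0)$ follows from the martingale representation theorem applied to $\bE_t\bigl[\xi+\int_t^T\beta\,du\bigr]$; the a-priori bound and uniqueness then come from Proposition \ref{pro:model-1}, applied to a given solution and to the difference of two solutions. Thus $\opnorm{S_{0,0}}$ is a universal constant, the threshold $\delta$ produced by Theorem \ref{thm:exc-neigh} is itself universal, and choosing $\Delta\alpha=\alpha$, $\Delta\kA=\kA$ delivers well-posedness of $\bsdeaa$ as soon as both $\alpha$ and $\kA$ are $\delta$-sliceable, which the sliceability hypothesis supplies.

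For (2), I would take the base $(0,\kA)$, well-posed by assumption, and perturb by $\Delta\alpha=\alpha$, $\Delta\kA=\boldsymbol{0}$. The zero process is trivially $\delta$-sliceable for every $\delta>0$ via the single-piece partition $0=\tau_0\leq\tau_1=T$, so the only hypothesis to verify is $\delta$-sliceability of $\alpha$ for the threshold $\delta=\delta(\opnorm{\Sa})$, and this is immediate from the sliceability assumption.

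For (3), I would make (2) quantitative using \eqref{bd-slic}. For a bounded $\alpha$, the argument behind \eqref{bd-slic} --- applied to the $\bmoh$ analogue of sliceability via a deterministic partition of $[0,T]$ into $m$ equal subintervals --- bounds $N_{\alpha}(\delta)$ in terms of $\|\alpha\|_{\lii}$ and $\delta$ alone. With $N_{\Delta\kA}(\delta)=1$ for $\Delta\kA=\boldsymbol{0}$, the estimate \eqref{norm-sakd} then yields $\opnorm{\Saa}\leqc 1+N_{\alpha}(\delta)$ with $\delta=\delta(\opnorm{\Sa})$, so the resulting constant depends only on $\opnorm{\Sa}$ and $\|\alpha\|_{\lii}$, as required. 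There is no serious obstacle here; the only points needing attention are the universal well-posedness of $\text{BSDE}(0,0)$ in (1) and the careful tracking of dependencies of $\delta$ and $N_{\alpha}(\delta)$ in (3).
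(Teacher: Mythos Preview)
Your proposal is correct and follows essentially the approach the paper intends: each part is an application of Theorem \ref{thm:exc-neigh} with an appropriate base pair, together with the sliceability bound for bounded processes from Remark \ref{rem:slic} in part (3). The only cosmetic difference is that the paper points to Proposition \ref{pro:lem:tri-dif-exc} for the well-posedness of the trivial base in (1) (the zero matrix being vacuously lower triangular), while you argue directly via Proposition \ref{pro:model-1}; these coincide in this degenerate case.
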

The following corollary plays the key role in our treatment of nonlinear BSDEs
with triangular quadratic drivers in the next section. 
\begin{corollary}\label{cor:sup-lin-exc} Suppose that $\kA\in\bmo$, $\alpha \in \lii$ and the
  convex superlinear function $\vp:[0,\infty)\to [0,\infty)$ are such that
  \begin{align*}
    \vp(\abs{\kA^i_j}^2)^{1/2} \in \bmo \efor j>i.
  \end{align*}
  Then $\bsdeaa$ is well-posed and $\opnorm{S_{\alpha,\kA}}\leq C$, where
  $\const{\norm{\alpha}_{\lii}, \norm{\kA}_{\bmo}, \vp}$. 
\end{corollary}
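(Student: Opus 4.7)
The plan is to reduce to Proposition \ref{pro:lem:tri-dif-exc} and Theorem \ref{thm:exc-neigh} via a triangular-plus-perturbation decomposition. Write $\kA = \kA_L + \kA_U$, where $\kA_L$ retains the entries of $\kA$ on and below the diagonal (so that $\kA_L$ is lower triangular in the sense of Proposition \ref{pro:lem:tri-dif-exc}) and $\kA_U = \kA - \kA_L$ is strictly upper triangular. Both pieces inherit $\bmo$-membership from $\kA$ with norms controlled by $\norm{\kA}_{\bmo}$. Proposition \ref{pro:lem:tri-dif-exc} then guarantees that $\text{BSDE}\,(0, \kA_L)$ is well-posed with $\opnorm{S_{\kA_L}} \leq C$, $\const{\norm{\kA}_{\bmo}}$.

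Next I would apply Theorem \ref{thm:exc-neigh} to the base equation $\text{BSDE}\,(0, \kA_L)$ with perturbation $(\Delta \alpha, \Delta \kA) = (\alpha, \kA_U)$, so that the perturbed equation is precisely $\bsdeaa$. The theorem produces a sliceability threshold $\delta = \delta(\opnorm{S_{\kA_L}})$, which here depends only on $\norm{\kA}_{\bmo}$. Sliceability of $\alpha$ at this threshold follows immediately from $\alpha \in \lii$ via the estimate \eqref{bd-slic} of Remark \ref{rem:slic}, giving $N_\alpha(\delta) \leq C(\norm{\alpha}_{\lii}, \delta)$.

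For the matrix $\kA_U$ I would invoke the convex-superlinear sufficient condition from Remark \ref{rem:slic}: the hypothesis $\sqrt{\vp(\abs{\kA^i_j}^2)} \in \bmo$ yields sliceability of each strictly upper-triangular entry with index bounded in terms of $\delta$, $\norm{\kA}_{\bmo}$ and $\vp$. To lift entrywise sliceability to sliceability of the whole matrix $\kA_U$, I would take a common refinement of the finitely many (at most $n(n-1)/2$) witnessing partitions and rescale $\delta$ by a factor depending only on $n$; on the common partition each entry of $\kA_U \itauk$ is small enough in $\bmo$ that the componentwise bound yields $\norm{\kA_U \itauk}_{\bmo} \leq \delta$, so $N_{\kA_U}(\delta) \leq C(\delta, \norm{\kA}_{\bmo}, \vp)$. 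Feeding the two sliceability indices into the bound \eqref{norm-sakd} then yields well-posedness of $\bsdeaa$ together with the desired estimate $\opnorm{S_{\alpha, \kA}} \leq C$, $\const{\norm{\alpha}_{\lii}, \norm{\kA}_{\bmo}, \vp}$. The only step requiring real care is this common-refinement argument that promotes entrywise sliceability to matrix-level sliceability; the remaining pieces are direct invocations of the machinery already developed in the section.
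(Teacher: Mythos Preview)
Your proposal is correct and follows exactly the route the paper intends: the corollary is stated without proof as a direct consequence of Proposition \ref{pro:lem:tri-dif-exc}, Theorem \ref{thm:exc-neigh}, and the sliceability criteria in Remark \ref{rem:slic}, via the decomposition $\kA = \kA_L + \kA_U$ you describe. Your only flagged step, promoting entrywise sliceability of $\kA_U$ to matrix-level sliceability by a common refinement and an $n$-dependent rescaling of $\delta$, is exactly what is needed and is routine (indeed, Remark \ref{rem:slic} produces \emph{deterministic} partitions in the convex-superlinear case, so the common refinement is just the union of partition points).
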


The results of this section all have $L^q$ analogues, proven in essentially the same way, with part 2 of Proposition \ref{pro:model-2} replacing part 1. Rather than develop the whole theory in parallel, we have decided to state only what we need for application to quadratic BSDEs. 

\begin{proposition} \label{pro:lqexc}
  Suppose that $\kA \in \bmo$, $\alpha \in \lii$, and the
  convex superlinear function $\vp:[0,\infty)\to [0,\infty)$ are such that
  \begin{align*}
    \vp(\abs{\kA^i_j}^2)^{1/2} \in \bmo \efor j>i.
  \end{align*}
  Then there exists $q^* = q^*(||A||_{\bmo}, \phi)$, such that for each $q > q^*$ and $\xi \in \lque$, there is a unique solution $(Y,Z) \in \sque \times \ltq$ to the BSDE
  \begin{align*}
      Y = \xi + \int_{\cdot}^T \big(\alpha Y + \kA \kZ + \beta\big) dt - \int_{\cdot}^T \kZ d\kB,
  \end{align*}
  and we have the estimate 
  \begin{align*}
      ||Y||_{\sque} + ||\kZ||_{\ltq} \leq_C ||\xi||_{\lque} + ||\beta||_{\loq}, \, \, \const{q,\norm{\kA}_{\bmo}, \norm{\alpha}_{L^{\infty}}, \phi}.
  \end{align*}
  
\end{proposition}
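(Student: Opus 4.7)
The strategy is to mirror the development leading from Proposition \ref{pro:1d-lin-bsde} to Corollary \ref{cor:sup-lin-exc}, replacing each application of Proposition \ref{pro:1d-lin-bsde}(1) with Proposition \ref{pro:1d-lin-bsde}(2) and every $\sibm$-estimate with an $\sque\times\ltq$-estimate. Concretely, I would introduce an $L^q$-analogue of well-posedness for $\bsdeaa$ and prove two building blocks in parallel to the $L^\infty$ theory: an $L^q$ triangular lemma and an $L^q$ sliceability exchange theorem, then combine them via the convex-superlinear sliceability criterion of Remark \ref{rem:slic}.

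For the triangular step, assume $\kA$ is lower-triangular and argue by induction on the row index $i$. The $i$-th equation is a scalar BSDE driven by $\kA^i_i\cdot\kZ^i+\hat\beta^i$, where $\hat\beta^i=\sum_{j<i}\kA^i_j\cdot\kZ^j+\beta^i$. By the inductive hypothesis each $\kZ^j$ lies in $\ltq$, and an $L^q$-version of the BMO-H\"older bound \eqref{bmo-holder}, namely $\norm{\kA^i_j\cdot\kZ^j}_{\loq}\leqc\norm{\kA^i_j}_{\bmo}\norm{\kZ^j}_{\ltq}$ (obtained from a Fefferman/Garsia-type inequality once $q$ is large enough for the reverse-H\"older property of $\mathcal{E}(\int\kA^i_i\,d\kB)$ to apply), places $\hat\beta^i$ in $\loq$. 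Proposition \ref{pro:1d-lin-bsde}(2) then yields $(Y^i,\kZ^i)\in\sque\times\ltq$ with the required estimate, provided $q$ exceeds a threshold $q_1^*=q_1^*(\norm{\kA}_{\bmo})$.

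For the sliceability exchange step, I would replicate the fixed-point argument of Theorem \ref{thm:exc-neigh} verbatim on the Banach space $\sque\times\ltq$ restricted to a slice $[\tau_{k-1},\tau_k]$. The contraction constant now comes from the operator norm of the ``base'' solution map composed with the BMO-to-$L^q$ estimates $\norm{\Delta\kA\,\itauk\,\kZ}_{\loq}\leqc\norm{\Delta\kA\,\itauk}_{\bmo}\norm{\kZ}_{\ltq}$ and the corresponding bound for $\Delta\alpha\,\itauk$, so choosing the slices fine enough in BMO and $\bmoh$ forces $\tot$-contractivity. Assembling the pieces as in Corollary \ref{cor:sup-lin-exc}, I write $\kA=\kA_L+\kA_U$ with $\kA_L$ the lower-triangular part, use the triangular lemma on $\kA_L$, and appeal to the sliceability exchange with $\kA_U$ and $\alpha$ as perturbations; the Orlicz-type hypothesis $\sqrt{\vp(|\kA^i_j|^2)}\in\bmo$ for $j>i$ makes $\kA_U$ sliceable via Remark \ref{rem:slic}, and $\alpha\in\lii$ is sliceable by \eqref{bd-slic}.

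The principal obstacle is arranging for a single $q^*$ to serve all invocations of Proposition \ref{pro:1d-lin-bsde}(2) that arise in the induction and in the (finitely many) slices of the exchange step, since each such invocation imposes its own lower bound on $q$ through the reverse-H\"older exponent of an exponential of a BMO martingale whose norm is controlled by $\norm{\kA}_{\bmo}$. Because the number of rows is $n$, the number of slices is controlled by $\vp$ and $\norm{\kA}_{\bmo}$, and every coefficient absorbed by a change of measure has BMO norm bounded by $\norm{\kA}_{\bmo}$, only finitely many such constraints appear and a finite $q^*=q^*(\norm{\kA}_{\bmo},\norm{\alpha}_{\lii},\vp)$ can be selected; the delicate bookkeeping lies in verifying the BMO/$\ltq$ multiplicative bound and propagating the dependence of the universal constant through each row and each slice.
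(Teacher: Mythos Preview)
Your proposal is correct and follows exactly the approach the paper intends: the paper does not give a detailed proof but simply remarks that ``the results of this section all have $L^q$ analogues, proven in essentially the same way, with part 2 of Proposition \ref{pro:model-2} replacing part 1,'' and your outline of an $L^q$ triangular lemma plus an $L^q$ sliceability--exchange step combined via Remark \ref{rem:slic} is precisely this parallel development. Your bookkeeping of a single threshold $q^*$ is slightly more careful than what the paper spells out, but note that the statement records $q^*=q^*(\norm{\kA}_{\bmo},\vp)$ only, since the changes of measure involve the diagonal entries of $\kA$ while $\alpha\in\lii$ enters only through the slice count and hence only through the constant $C$.
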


\subsection{The reverse H\"older inequality}
If $M$ is a real-valued $\BMO$ martingale, then the stochastic exponential $\mathcal{E}(M)$ is a true martingale which satisfies a Reverse H\"older inequality, a fact which will be stated precisely below. In this section, we use Example \ref{exa:non-exist} to show that an analogous statement does not hold for matrices of $\BMO$ martingales, which answers a ``crucial open question" raised in Remark 3.5 of \cite{harter2019}. Throughout the section, $\kA$ refers to a $\bmo$ process taking values in $(\R^d)^{n \times n}$. The following two definitions are adapted from Definition 3.1 of \cite{Delbaen-Tang}. 

\begin{definition} \label{sdef}
Let $M$ be a martingale taking values in $\R^{n \times n}$. We define the \textbf{stochastic exponential of $M$}, written $\mathcal{E}(M)$, to be the unique solution $S$ to the matrix SDE 
\begin{align} \label{expdef}
    dS = S dM, \, \, S_0 = I_{n \times n},
\end{align}
where $I_{n \times n}$ denotes the $n \times n$ identity matrix. 
\end{definition}
That \eqref{expdef} has a unique solution follows from \cite{Pro04} Theorem 3.7. Notice that $\int \kA d \kB$ is an $\R^{n \times n}$-valued local martingale. For the rest of the section, we write $S$ for the stochastic exponential of $\int \kA d\kB$, suppressing the dependence on $A$. That is, $S$ refers to the solution of the matrix SDE\begin{align*}
    dS = S \kA d\kB, \, \, S_0 = I_{n \times n}.
\end{align*}

\begin{remark} \
\begin{enumerate}
    \item Our notation for the stochastic exponential differs from that of \cite{Delbaen-Tang} by a transpose, but agrees with that of \cite{harter2019}. 
    \item Note that if $n = 1$, then $\mathcal{E}(M) = \exp{M - \frac{1}{2} \langle M \rangle}$, i.e. the matrix-valued stochastic exponential agrees with the usual stochastic exponential in dimension one. 
\end{enumerate}
\end{remark}

\begin{definition} \label{rp}
Given $1 \leq p < \infty$, we say that $S$ satisfies the reverse H\"older inequality $\rp$ if $S$ is a true martingale and the estimate
\begin{align}  \label{rhdef}
\bE_{\tau}[|S_T|^p] \leqc |S_{\tau}|^p
\end{align}
holds for some matrix norm $| \cdot |$ and all stopping times $\sigma$ with $0 \leq \sigma \leq T$.
\end{definition}

\begin{remark} \
\begin{enumerate}
    \item We have included the condition that $S$ is a true martingale in order to simplify various statements in the remainder of this sub-section. 
    \item Testing $\eqref{rhdef}$ with $\tau = 0$ reveals that if $S$ satisfies $\rp$, then $S \in \mpee$, and in particular $S$ is uniformly integrable. 
\end{enumerate}
\end{remark}

With this notation in place, we can recall the following basic fact about $\BMO$ martingales in dimension one, which combines Theorems 2.3 and 3.1 of \cite{Kazamaki}. 

\begin{theorem} \label{thm:1drh}
 If $n = 1$ and $\kA \in \bmo$, then $S$ satisfies $\rp$ for some $p = p(\norm{\kA}_{\bmo}) > 1$. 
\end{theorem}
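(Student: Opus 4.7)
The plan is to reduce the statement to the classical scalar reverse H\"older inequality for stochastic exponentials of $\BMO$ martingales. Set $M := \int_0^{\cdot} \kA \, d\kB$, so that $M$ is a real-valued martingale with $\norm{M}_{\BMO} = \norm{\kA}_{\bmo}$. Since $n=1$, Definition \ref{sdef} together with uniqueness of solutions to the scalar SDE $dS = S \kA \, d\kB$ identifies $S$ with the usual exponential $\mathcal{E}(M) = \exp(M - \tot \ab{M})$.

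For any $p>1$ and any stopping time $0 \leq \tau \leq T$, direct algebra applied to this explicit formula yields the factorization
\begin{align*}
    \Bigl(\frac{S_T}{S_\tau}\Bigr)^p
    = \frac{\mathcal{E}(pM)_T}{\mathcal{E}(pM)_\tau}
    \cdot \exp\Bigl(\tfrac{p^2-p}{2}\bigl(\ab{M}_T - \ab{M}_\tau\bigr)\Bigr).
\end{align*}
I would then apply H\"older's inequality with conjugate exponents $q,r>1$ to decouple the two factors. For the first factor, the tower property yields
$\bE_\tau[(\mathcal{E}(pM)_T/\mathcal{E}(pM)_\tau)^q] = \bE_\tau[\mathcal{E}(pqM)_T]/\mathcal{E}(pqM)_\tau \cdot (\text{correction})$, and after reorganizing, what is needed is that $\mathcal{E}(cM)$ be a uniformly integrable martingale for appropriate $c>0$; this is automatic by Kazamaki's martingality criterion, since $cM\in\BMO$ for every finite $c$.

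For the second factor, the required estimate is of the form
\begin{align*}
    \bE_\tau\bigl[\exp\bigl(\lambda(\ab{M}_T - \ab{M}_\tau)\bigr)\bigr] \leqc 1,
\end{align*}
for $\lambda$ proportional to $p^2-p$, and this is precisely the John--Nirenberg inequality for $\BMO$ martingales, which holds uniformly in $\tau$ whenever $\lambda$ is sufficiently small relative to $\norm{\kA}_{\bmo}^{-2}$, with constant depending only on $\lambda$ and $\norm{\kA}_{\bmo}$. The two constraints on $p$ and $r$ are simultaneously satisfiable by choosing $p>1$ and $r>1$ close enough to $1$, with degree of closeness determined solely by $\norm{\kA}_{\bmo}$.

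Combining the two bounds produces $\bE_\tau[(S_T/S_\tau)^p]\leqc 1$, i.e., $\bE_\tau[|S_T|^p]\leqc |S_\tau|^p$ with $\const{\norm{\kA}_{\bmo}}$, which is the required $\rp$ inequality. That $S$ is a true martingale (needed for Definition \ref{rp}) also follows from Kazamaki's criterion applied to $M$ itself. I do not anticipate any genuinely hard step: the essential nontrivial input is the exponential integrability of $\ab{M}$ for a $\BMO$ martingale, which is a standard tool, and the rest is bookkeeping with H\"older. This matches the authors' split attribution to Theorems 2.3 (Kazamaki's criterion) and 3.1 (reverse H\"older characterization of $\BMO$) of \cite{Kazamaki}.
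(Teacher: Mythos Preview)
The paper does not actually prove this theorem: it records it as a classical fact and cites Theorems 2.3 and 3.1 of \cite{Kazamaki}. So there is no detailed argument in the paper to compare against.

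Your sketch has the right ingredients but a genuine gap in the treatment of the first H\"older factor. After H\"older with exponents $q,r$, that factor is $\bE_\tau\bigl[(\mathcal{E}(pM)_T/\mathcal{E}(pM)_\tau)^q\bigr]$, and the identity you invoke keeps the ``correction'' $\exp\bigl(\tfrac{p^2q(q-1)}{2}(\ab{M}_T-\ab{M}_\tau)\bigr)$ \emph{inside} the conditional expectation, not outside; uniform integrability of $\mathcal{E}(cM)$ alone does not control it. If instead you rearrange so that the stochastic-exponential factor appears with exponent exactly $1$ (so that UI yields the bound $1$ directly), a short optimization over the remaining H\"older exponent shows that the residual John--Nirenberg exponent cannot be pushed below $\tfrac12$ as $p\downarrow 1$. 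Hence this direct H\"older route only closes when $\norm{\kA}_{\bmo}<\sqrt{2}$.

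The general case needs a further input: either Gehring-type self-improvement or, more transparently, the fact that $\BMO$ is preserved under a Girsanov shift by a $\BMO$ exponential. Writing the target as $\bE^{\bQ}_\tau\bigl[\exp\bigl(\tfrac{p(p-1)}{2}(\ab{M}_T-\ab{M}_\tau)\bigr)\bigr]$ with $d\bQ/d\bP=\mathcal{E}(pM)_T$, one then applies John--Nirenberg \emph{under $\bQ$}, where $\tilde M=M-p\ab{M}$ is a $\bQ$-$\BMO$ martingale with $\ab{\tilde M}=\ab{M}$ and norm controlled by $\norm{\kA}_{\bmo}$. That additional step, not ``bookkeeping with H\"older,'' is what makes the result hold for arbitrary $\norm{\kA}_{\bmo}$.
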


Recognizing the applications of the reverse H\"older inequality to linear BSDEs with $\bmo$ coefficients, and hence to quadratic BSDE systems, the authors of \cite{harter2019} raised the following ``crucial open question" in Remark 3.5: when $n > 1$, does an analogue of Theorem \ref{thm:1drh} hold? The answer is yes when $\kA$ is sliceable, see Remark 3.2 of \cite{Delbaen-Tang}. In the remainder of this sub-section, we answer this question in the negative. First, we recall that solutions to the equation

\begin{align} \label{lin-bsde-2}
Y = \xi + \int_{\cdot}^T \kA \kZ dt - \int_{\cdot}^T \kZ d \kB
\end{align}
can be represented using $S$, a well-known principle (see \cite{Delbaen-Tang} and \cite{harter2019}), which we prove for the sake of completeness. First, we show that $S$ is invertible.

\begin{lemma} \label{lem:sinv}
If $\kA \in \bmo$, then for each $t \in [0,T]$, $S_t$ is invertible a.s. 
\end{lemma}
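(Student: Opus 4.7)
The plan is to construct an explicit right inverse $T$ via a companion linear matrix SDE, and then verify $ST \equiv I$ using It\^o's formula. Once $ST = I$ on $n\times n$ matrices, the elementary linear-algebra fact that a right inverse of a square matrix is a two-sided inverse forces $T_t = S_t^{-1}$, so each $S_t$ is invertible a.s.

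Concretely, let $G$ denote the $\R^{n\times n}$-valued process with entries $G^i_j = \sum_{k=1}^n \kA^i_k \cdot \kA^k_j$ (the products being the $\R^d$-inner product from the convention of Section \ref{sse:notation}), and define $T$ as the unique solution to the linear matrix SDE
\begin{align*}
  dT = -\kA\, d\kB\, T + G\, T\, dt, \qquad T_0 = I_{n\times n}.
\end{align*}
Existence and uniqueness follow from the same result already invoked to construct $S$ (Theorem 3.7 of \cite{Pro04}): the equation is linear in $T$, and $\kA \in \bmo$ guarantees that $\int_0^T |\kA|^2\, dt < \infty$ a.s., so both $\int \kA\, d\kB$ and $\int G\, dt$ are bona fide driving semimartingales.

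The heart of the argument is to apply It\^o's formula entrywise to $ST$, where two cancellations occur. First, the martingale contributions from $(dS)\,T$ and $S\,(dT)$ (summed over the internal index) both equal $\pm (S\,\kA\, d\kB\, T)^i_j$ and so cancel. Second, the drift $(SGT)^i_j\, dt$ coming from $S\,(dT)$ is exactly annihilated by the covariation term
\begin{align*}
  \sum_k d[S^i_k, T^k_j] \;=\; -\sum_{k,l,m} S^i_l (\kA^l_k \cdot \kA^k_m) T^m_j\, dt \;=\; -(SGT)^i_j\, dt,
\end{align*}
where the $\R^d$-inner product appears naturally from the definition of the quadratic covariation of two stochastic integrals against $\kB$. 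The form of $G$ is engineered precisely to produce this second cancellation. Hence $d(ST) = 0$, so $S_t T_t = I$ for all $t \in [0,T]$, a.s.

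The main technical care is in keeping straight the $\R^d$-valued matrix multiplications introduced in Section \ref{sse:notation} and the corresponding cross-bracket computation, so that the two cancellations above are transparent. No genuine obstacle is expected beyond this bookkeeping; conceptually the argument is the natural matrix analogue of the scalar identity $\mathcal{E}(M)\,\mathcal{E}(-M + \ab{M}) = 1$.
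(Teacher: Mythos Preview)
Your proof is correct and follows essentially the same approach as the paper's. Your companion process $T$ (with $G = \kA^2$ in the paper's shorthand) is exactly the process $X$ the paper introduces, and your It\^o-product-rule verification that $ST \equiv I$ is the computation the paper summarizes as ``an application of the product rule reveals that $d(SX)=0$''; the only cosmetic difference is that the paper asserts both $SX=I$ and $XS=I$ directly, whereas you check one side and invoke the square-matrix fact that a right inverse is two-sided.
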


\begin{proof}
Define $X$ by the matrix SDE 
\begin{align*}
    dX = \kA^2 X dt - \kA X d\kB, \, \, X_0 = I_{n \times n},
\end{align*}
which has a unique solution by \cite{Pro04} Theorem 3.7. An application of the product rule reveals that 
\begin{align*}
    d \big(XS) = d \big(SX) = 0, 
\end{align*}
which implies that for each $t \in [0,T]$, we have 
\begin{align*}
    I_{n \times n} = X_tS_t = S_t X_t. 
\end{align*}
This completes the proof. 
\end{proof}

\begin{proposition} \label{pro:yrep}
  Suppose that $S$ is a true martingale. Then if $(Y,\kZ) \in \sinf \times \bmo$ solves \eqref{lin-bsde-2}, then for each $t$, we have
  \begin{align*}
      Y_t = S_t^{-1}\bE_t[S_T \xi]. 
  \end{align*}
  In particular, there is at most one solution to \eqref{lin-bsde-2} in $\sinf \times \bmo$. 
\end{proposition}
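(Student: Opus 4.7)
The plan is to show that $SY$ is a true martingale on $[0,T]$ and then read off the representation by taking conditional expectations. First I would apply Itô's product rule componentwise to $(SY)_i = S_{ij} Y_j$, using $dS = S\kA\, d\kB$ and the BSDE dynamics $dY = -\kA\kZ\, dt + \kZ\, d\kB$. The drift coming from $S\, dY$ is $-(S\kA\kZ)_i\, dt$, while the cross variation between $dS_{ij}$ and $dY_j$ (implicitly summed over $j$) produces exactly $+(S\kA\kZ)_i\, dt$, once the paper's convention is used to interpret $\kA^k_j \cdot \kZ^j$ as the $\R^d$-inner product of the diffusion components. These two drifts cancel and $SY$ is a local martingale.

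Next I would promote this local martingale to a true uniformly integrable martingale via the integrability hypotheses. Since $S \in \mpee$, $Y \in \sque$ and $1/p + 1/q = 1$, for a submultiplicative matrix norm $|\cdot|$ one has $\sup_{t \leq T}|S_tY_t| \leq S^* Y^*$, and by Hölder's inequality
\[
\bE[ S^* Y^* ] \leq \norm{S^*}_{\lpee} \norm{Y^*}_{\lque} < \infty.
\]
Hence $SY$ is a local martingale dominated by an integrable random variable, and is therefore a uniformly integrable true martingale. Taking conditional expectations gives
\[
S_t Y_t = \bE_t[S_T Y_T] = \bE_t[S_T \xi],
\]
and inverting $S_t$ via Lemma \ref{lem:sinv} yields the stated formula.

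Uniqueness now follows immediately: any two solutions $(Y,\kZ), (Y',\kZ') \in \sque \times \ltq$ must have $Y = Y'$ by the representation, after which $\int_{\cdot}^T (\kZ - \kZ')\, d\kB \equiv 0$, and uniqueness in the martingale representation theorem forces $\kZ = \kZ'$.

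I do not expect any serious obstacle; the only real bookkeeping is the cross-variation identity $d\langle S_{ij}, Y_j \rangle = (S\kA\kZ)_i\, dt$, which is an algebraic consequence of the convention that ``multiplication'' between the $\R^d$-valued entries is an inner product, together with standard care in promoting the local martingale $SY$ to a true martingale (handled uniformly by the Hölder estimate above, so that no localization sequence is actually needed in the write-up).
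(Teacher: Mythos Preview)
Your proposal is correct and follows essentially the same route as the paper: an It\^o product-rule computation showing $d(SY)$ has no drift, followed by a H\"older-type domination argument to upgrade the local martingale $SY$ to a true martingale, then conditional expectations and Lemma~\ref{lem:sinv}. The paper's proof is nearly identical, only less explicit about the domination step (it simply asserts the integrability hypotheses suffice) and about the $\kZ$-uniqueness via martingale representation.
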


\begin{proof}
We compute 
\begin{align*}
    d (SY) = (dS) Y + S (dY) + (dS)(dY) \\ = (S \kA d\kB) Y + S\big(-\kA \kZ dt + \kZ d\kB\big) + (S \kA d\kB)(\kZ d\kB) \\
    = \big(S \kA Y + S \kZ \big) d \kB. 
\end{align*}
Since $S$ is a true martingale and $Y$ is bounded, $SY$ is of class (DL), hence a true martingale. In particular, for each $t$ we have 
\begin{align*}
    S_tY_t = \bE_t[S_T Y_T] = \bE_t[S_T \xi], 
\end{align*}
and so by the invertibility of $S_t$,
\begin{align*}
    Y_t = S_t^{-1} \bE_t[S_T \xi]. 
\end{align*}
\end{proof}

\begin{corollary} \label{cor:norp}
Let $A$ be the $\R^{2 \times 2}$-valued $\bmo$ process constructed in Example \ref{exa:non-exist}. Then $S$ is not a true martingale. In particular, $S$ does not satisfy $\rp$ for any $p \geq 1$.
\end{corollary}

\begin{proof}
If $S$ is a true martingale, then there is at most one solution to \eqref{non-exist-bsde} in $\sinf \times \bmo$. But we showed in Example \ref{exa:non-exist} that there is a non-zero solution to \eqref{non-exist-bsde} in $\sinf \times \bmo$.
\end{proof}

We note there are several natural generalizations of the reverse H\"older inequality to matrices, and we have essentially used the definition given in \cite{Delbaen-Tang}. In \cite{harter2019}, the authors use a slightly different definition of the reverse H\"older inequality for matrices than we have. In particular, they say that $S$ satisfies a Reverse H\"older inequality if there is a constant $C$ such that the estimate 
\begin{align*}
    \bE_{\tau}[ \sup_{\tau \leq t \leq T} |S_{\tau}^{-1} S_t|^p] \leq C
\end{align*}
holds for all stopping times $0 \leq \tau \leq T$. If this condition holds, we say that $S$ satisfies $\rphr$. Setting $\tau = 0$, we see that if $S$ satisfies $\rphr$, then $\sup_{0 \leq t \leq T} |S_t| \in \lpee$, and so $S$ is a uniformly integrable martingale. Furthermore, the computation
\begin{align*}
    \bE_{\tau}[|S_T|^p] = \bE_{\tau}[|S_{\tau} S_{\tau}^{-1} S_T|^p] \\
    \leqc \bE_{\tau} [|S_{\tau}|^p |S_{\tau}^{-1} S_T|^p] = |S_{\tau}|^p \bE_{\tau}[|S_{\tau}^{-1} S_T|^p]
\end{align*}
shows that $\rphr$ implies $\rp$ for each $p > 1$. In particular, Corollary \ref{cor:norp} also shows that if $A$ is the $\bmo$ process constructed in Example \ref{exa:non-exist}, then $S$ does not satsify $\rphr$ for any $p > 1$.

\section{Triangular BSDE}\label{sec:triangular}
\subsection{The main result}

\begin{definition}\label{def:driver} A \define{driver} is a random field $f : [0,T]
  \times \Omega \times \R^n \times (\R^d)^n \to \R^n$ such that
  \begin{enumerate}
    \item $f(\cdot,\cdot,y,\kz)$ is progressively measurable process for all
    $y,\kz$.
    \item $f(\cdot,\omega,\cdot,\cdot)$ is a continuous function for each
    $\omega$.
  \end{enumerate}
  \end{definition}

  Given a driver $f$ and a random vector $\xi$ we are interested in the
  following BSDE:
  \begin{align} \label{bsde}
  Y = \xi + \int_{\cdot}^T f(\cdot,Y,\kZ)\, dt - \int_{\cdot}^T \kZ\, d\kB
  \end{align}
  A pair $(Y,\kZ)$ consisting of a semimartingale $Y$ and a progressive process
  $\kZ$ is called a \define{solution to the BSDE} $(\xi,f)$, also denoted by
  $(Y,\kZ) \in \bsde(\xi,f)$, if $\int_0^{\cdot} \kZ\, d\kB$ is a martingale and
  \eqref{bsde} holds pathwise, a.s. If $Y\in \sinf$ and $\kZ\in \bmo$, we say
  that $(Y,\kZ)$ is a \define{$\sinf\times\bmo$-solution}, and denote that by
  $(Y,\kZ)\in \bsde_{\sinf\times\bmo}(\xi,f)$. A similar convention is used for
  other spaces of processes.

  \medskip

\begin{definition}
    
\label{def:driversmooth}
  \noindent A driver  $f$ is said to be
\begin{itemize}
\item \define{Quadratic} if there exists a constant $L$ such that for all
$y,y',\kz,\kz'$
\begin{align*}
  \abs{f(\cdot, y',\kz') - f(\cdot, y,\kz)} \leq L\abs{y'-y} +
  L \Big(1+\abs{y}+\abs{y'}+\abs{\kz}+\abs{\kz'}\Big)\abs{\kz'-\kz},
\end{align*}
and $\norm{f(\cdot,\cdot,0,0)}_{\lii}\leq L$. In case we write $f\in\qua(L)$,
with $\qua = \cup_{L} \qua(L) $.

\item \define{$C^1$-regular} if $f(t,\omega,\cdot,\cdot) \in C^1$ for all
  $t,\omega$. In this case, we write $f\in \con$.
\item \define{Malliavin-regular} if there exists a constant $L>0$ such that
  $f\in\qua(L)$ and there exists a random field $D_{\cdot}f: [0,T]^2 \times
  \Omega \times \R^d \times (\R^d)^n \to \R$ such that
    \begin{enumerate}
      \item for all $(y,\kz)$, $D_{\cdot}f(\cdot,y,\kz)$ is a version of the
      Malliavin derivative of the process $f(\cdot, y, \kz)$,
      \item $\abs{Df}\leq L$, and
      \item $\abs{ D_{\cdot}f(\cdot, y', \kz') - D_{\cdot}f(\cdot, y, \kz)} 
      \leq  L \Big( \abs{y'-y} + \abs{\kz' - \kz} \Big)$.
    \end{enumerate}
  In this case, we write $f \in \mal(L)$, with $\mal = \cup_L \mal(L)$.
  \end{itemize}
  \end{definition}
  A function $\kappa:[0,\infty) \to [0,\infty)$ is going to be called
  \define{sublinear} if it is non-decreasing, concave and $\lim_{x\to\infty}
  \tfrac{\kappa(x)}{x} = 0$. We also recall that vectors $a_1,\dots, a_M$ are
  said to \define{positively span $\R^n$} if for each $v\in \R^n$ there exist
  \emph{nonnegative} coefficients $c_1,\dots, c_M$ such that $\sum_m c_m a_m =
  v$.

  \begin{definition} A driver is  said to 
    \begin{itemize}
      \item be \define{triangular} if there exist a constant $L>0$ and a
   sublinear function $\kappa$ such that $f\in \qua(L)$ and for all $y,y'$,
   $\kz,\kz'$ and $1\leq i \leq n$, we have
  \begin{align*}
     & \abs{f^i(\cdot, y',\kz') - f^i(\cdot, y,\kz)}  \leq L \abs{y' - y} + \\
     & \qquad  + L \sum_{j = 1}^i \Big(1 + \abs{y} + \abs{y'} + \abs{\kz}
    + \abs{\kz'}\Big) \,\abs{\kz'^j - \kz^j} + 
    \\ &\qquad + L \sum_{j = i+1}^n \Big(1 + \abs{y} + \abs{y'} + 
    \kappa(\abs{\kz}) + \kappa(\abs{\kz'})\Big) \,\abs{\kz'^j - \kz^j}
  \end{align*}
  In this case, we write $f \in \tri(L, \kappa)$, with $\tri = \cup_{L,\kappa}
  \tri(L,\kappa)$.  
  \item \define{satisfy the condition (AB)} if there is a
  process $\rho \in \loi$ and a finite collection $\sam = (a_1,\dots, a_M)$ of
  vectors in $\R^n$ such that
  \begin{enumerate}
    \item $a_1,\dots, a_M$ positively span $\R^n$ 
    \item $ a_m^T f(t,\omega,y, \kz) \leq \rho + \tfrac{1}{2} \abs{a_m^T \kz}^2$
    for each $m$, for all $y,\kz$.
  \end{enumerate}
  In this case, we say that $f \in \apb(\rho, \sam)$, with
  $\apb=\cup_{\rho,\sam} \apb(\rho,\sam)$. 
\end{itemize}
\end{definition}

\begin{remark}
For a driver $f\in \qua(L)\cap \con$, we automatically have
\begin{align*}
    \abs{\pd{f}{y}(\cdot,y,\kz)}\leq L \eand \abs{\pd{f}{\kz}(\cdot,y,\kz)}\leq
    L \Big(1+ \abs{y}+\abs{\kz}\Big)
  \end{align*}
  for all $t,\omega,y$ and $\kz$. In that case, we have $f\in\tri(L,\kappa)$ if
  and only if
  \[  \abs{\pd{f^i}{\kz^j}(\cdot,y,\kz)} \leq L \Big(1+\kappa(\abs{\kz})\Big)
  \efor j>i,\] for all $t,\omega,y$ and $\kz$.
\end{remark}

\begin{theorem} \label{thm:main-tri} Assume that $\xi \in \linf$ and $f \in
 \tri(L,\kappa) \cap \mal(L) \cap \apb(\rho, \sam)$. Then \eqref{bsde} admits a
 unique solution $(Y,\kZ)\in \sibm$. This solution satisfies
 \begin{align*}
  \norm{Y}_{\sinf}+\norm*{\kZ}_{\bmo} \leq C, 
  \ewhere \const{L,\kappa,\rho,\sam, \norm{\xi}_{\linf}}.
\end{align*}
If, additionally, $\xi \in \Doi$, then $\kZ \in \lii$ and 
\[ \norm{\kZ}_{\lii} \leq C \ewhere \const{\norm{D \xi}_{L^{\infty}}, L,\kappa,\rho,\sam, \norm{\xi}_{\linf}} \] 
\end{theorem}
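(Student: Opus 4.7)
The plan is to follow the Briand-Elie strategy outlined in the introduction. First I would handle the case $\xi\in\Doi$ and $f\in\con$ by approximating $f$ with Lipschitz drivers $f^k$, using the Malliavin differential of the resulting BSDE to produce a uniform $\lii$-bound on $\kZ^k$, and passing to the limit; then I would treat general $\xi\in\linf$ and continuous $f$ via a second approximation. The starting a-priori $\sibm$-bound is obtained as follows: for any $(Y,\kZ)\in\sibm$ solving \eqref{bsde}, (AB) applied via It\^o to the Lyapunov functions $y\mapsto e^{\lambda a_m^T y}$ with $\lambda$ depending on $L$ furnishes a uniform $\linf$-bound on each $a_m^T Y$; since $a_1,\dots,a_M$ positively span $\R^n$, $\norm{Y}_{\sinf}\leqc 1$, and Proposition \ref{pro:model-2} together with the quadratic growth of $f$ then yields $\norm{\kZ}_{\bmo}\leqc 1$, with constants depending on $L,\rho,\sam,\norm{\xi}_{\linf}$.

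Assume now $\xi\in\Doi$, $f\in\con$, and pick Lipschitz drivers $f^k$ agreeing with $f$ on $\set{\abs{\kz}\leq k}$ and retaining (AB), triangularity and Malliavin regularity with uniformly controlled constants. Pardoux-Peng gives $(Y^k,\kZ^k)\in\bsde(\xi,f^k)$ satisfying the uniform $\sibm$-bounds above, and standard Malliavin calculus yields $(Y^k,\kZ^k)\in\Doi$ with $(D_r Y^k, D_r\kZ^k)$ solving the linear BSDE
\begin{align*}
  D_r Y^k = D_r\xi + \ict \bigl(\alpha^k D_r Y^k + \kA^k D_r\kZ^k + D_r f^k(\cdot,Y^k,\kZ^k)\bigr)\,dt - \ict D_r\kZ^k\,d\kB,
\end{align*}
where $\alpha^k = \tpd{f^k}{y}(\cdot,Y^k,\kZ^k)$ and $\kA^k = \tpd{f^k}{\kz}(\cdot,Y^k,\kZ^k)$.

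The heart of the proof is the verification that this linear BSDE satisfies the hypotheses of Corollary \ref{cor:sup-lin-exc} uniformly in $k$. From $f^k\in\qua(L)\cap\tri(L,\kappa)$, $\alpha^k\in\lii$ uniformly, $\abs{\kA^{k,i}_j}\leqc 1+\abs{Y^k}+\abs{\kZ^k}$ for $j\leq i$ (so these entries are uniformly in $\bmo$), and, crucially, $\abs{\kA^{k,i}_j}\leqc 1+\abs{Y^k}+\kappa(\abs{\kZ^k})$ for $j>i$. Since $\kappa$ is sublinear, $\kappa(x)^2=o(x^2)$; combined with the uniform bmo bound on $\kZ^k$ one constructs a convex superlinear $\vp:[0,\infty)\to[0,\infty)$ with $\vp(\kappa(x)^2)\leq C(1+x^2)$, and a short computation (using $\kZ^k\in\bmo$ uniformly) then gives $\vp(\abs{\kA^{k,i}_j}^2)^{1/2}\in\bmo$ uniformly for $j>i$. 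The Malliavin bound $\abs{Df^k}\leq L$ controls the free term $D_r f^k(\cdot,Y^k,\kZ^k)$ in $\linf$. Corollary \ref{cor:sup-lin-exc} therefore yields $\sup_{r,k}\norm{D_r Y^k}_{\sinf}\leqc 1$, and since $\kZ^k_t$ is a version of $D_t Y^k_t$, $\norm{\kZ^k}_{\lii}\leqc 1$ uniformly in $k$. For $k$ exceeding this bound, $f^k=f$ along $\kZ^k$, so $(Y^k,\kZ^k)$ already solves $\bsde(\xi,f)$, proving existence and the final $\lii$-bound on $\kZ$ in the smooth case.

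For general $\xi\in\linf$ and continuous $f$, one approximates $\xi$ by $\xi^n\in\Doi$ with $\norm{\xi^n}_{\linf}\leq\norm{\xi}_{\linf}$ and mollifies $f$ in $(y,\kz)$ to obtain $f^n\in\con$, each preserving (AB), $\tri(L,\kappa)$, $\qua(L)$ and $\mal(L)$ with uniform constants. The smooth case gives $(Y^n,\kZ^n)\in\sibm$ uniformly, and standard quadratic-BSDE stability extracts a limit in $\bsde(\xi,f)$. Uniqueness in $\sibm$ is obtained by subtracting two solutions $(Y,\kZ),(Y',\kZ')$: the difference solves a linear BSDE whose coefficient matrix inherits from the triangularity of $f$ the sublinear control above the diagonal required by Corollary \ref{cor:sup-lin-exc} (applied with $\vp$ chosen as in the key step using the bmo bounds on $\kZ$ and $\kZ'$), and well-posedness of that linear equation forces the difference to vanish. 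The main obstacle is the construction of $\vp$ in the key step: this is precisely where the triangular/sublinear hypothesis becomes essential, and where a general quadratic driver would fail, as Example \ref{exa:non-exist} shows that the corresponding linearized well-posedness can genuinely fail without such structural assumptions.
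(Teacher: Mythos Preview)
Your proposal follows the same architecture as the paper's proof: Lipschitz truncation in the smooth case, Malliavin differentiation to reach a linear BSDE governed by Corollary~\ref{cor:sup-lin-exc}, then mollification in $(y,\kz)$ and approximation of $\xi$ for the general case, with uniqueness via the same linearization. The identification of the sublinear/superlinear interplay (building a convex superlinear $\vp$ from the sublinear $\kappa$) as the structural crux is exactly right, and matches the content of Lemma~\ref{lem:tri-dif-exc}.

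Two places deserve tightening. First, your a-priori $\bmo$ bound on $\kZ$ via Proposition~\ref{pro:model-2} is not quite legitimate: that proposition concerns drifts of the form $\kA\kZ+\beta$ with $\kA\in\bmo$ given, whereas here the drift is genuinely quadratic in $\kZ$ and any linearization would have $\kA$ depending on the very $\kZ$ you are trying to bound. The paper instead extracts the $\bmo$ bound on $a_m^T\kZ$ from the same supermartingale computation that gives the $\sinf$ bound (see the second half of the proof of Proposition~\ref{pro:apr-Lip}), using that $R_t-\tfrac12\int_0^t R_s\abs{a_m^T\kZ_s}^2\,ds$ is a supermartingale once $R$ is known to be bounded. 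Also, the exponent $\lambda$ in the Lyapunov function is dictated by the coefficient $\tfrac12$ in (AB), not by $L$.

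Second, the phrase ``standard quadratic-BSDE stability extracts a limit'' hides a real issue. In the passage from smooth to general $f$ and $\xi$, the approximating $\kZ^n$ are only uniformly in $\bmo$, not in $\lii$, so the mollification error $f^n-f^m$ evaluated along $(Y^n,\kZ^n)$ cannot be controlled in $\bmoh$ (which is what the $\sibm$ stability of Proposition~\ref{pro:stab-sibm} would require). The paper handles this with the $\lque$-analogue of the linear theory (Proposition~\ref{pro:lqexc}) and the resulting $\sque\times\ltq$ stability (Proposition~\ref{pro:stab-sqlq}); the Cauchy property is then obtained in $\sque\times\ltq$ for large enough $q$, not in $\sibm$. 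Your linearization for uniqueness is the right template, but you should state explicitly that the limiting step uses its $\lque$-version.
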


\subsection{Existence in the smooth case}
The rest of this section is devoted to a proof of Theorem \ref{thm:main-tri}
and we start by treating the smooth case. 

Our approach is to combine approximation, the strategy of Briand and Elie,  
our results on linear BSDE in Section \ref{sec:linear} and the following fact
which follows directly from Corollary \ref{cor:sup-lin-exc}:
\begin{lemma}
  \label{lem:tri-dif-exc}
  If $f \in \con \cap \tri(L,\kappa)$ then for each $(Y,\kZ)\in\sibm$, $\bsdeaa$ is well-posed, where
  \begin{align*}
    (\alpha, \kA) \coloneqq \big(\pd{f}{y}(\cdot, Y, \kZ), \pd{f}{\kz}(\cdot,Y,\kZ) \big).
  \end{align*} 
  Moreover, $ \opnorm{\Saa} \leq C$ where $\const{\kappa, L, \norm{Y}_{\sinf} +
  \norm{\kZ}_{\bmo}}$.
\end{lemma}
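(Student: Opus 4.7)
The plan is to verify the hypotheses of Corollary \ref{cor:sup-lin-exc} directly for $\alpha := \pd{f}{y}(\cdot, Y, \kZ)$ and $\kA := \pd{f}{\kz}(\cdot, Y, \kZ)$, and then invoke it. The only real work is producing a suitable $\vp$ from the sublinearity of $\kappa$.

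First, the pointwise derivative bounds recorded in the remark preceding the lemma give, for $f \in \con \cap \qua(L)$, that $\abs{\pd{f}{y}} \leq L$ and $\abs{\pd{f}{\kz}} \leq L(1+\abs{y}+\abs{\kz})$. Substituting $(Y,\kZ)$, this immediately yields $\alpha \in \lii$ with $\norm{\alpha}_{\lii} \leq L$, and $\abs{\kA(t,\omega)} \leq L(1 + \norm{Y}_{\sinf} + \abs{\kZ_t})$, so $\kA \in \bmo$ with $\norm{\kA}_{\bmo}$ controlled by $L$, $\norm{Y}_{\sinf}$, and $\norm{\kZ}_{\bmo}$. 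For $j > i$ the triangular structure sharpens this to $\abs{\kA^i_j} \leq L(1 + \kappa(\abs{\kZ}))$ pointwise. Thus it suffices to exhibit a convex superlinear $\vp:[0,\infty) \to [0,\infty)$ such that $\vp(L^2(1+\kappa(u))^2) \leq u^2 + C'$ for all $u \geq 0$ and some $C' = C'(L,\kappa)$: for then pointwise $\sqrt{\vp(\abs{\kA^i_j}^2)} \leq \abs{\kZ} + \sqrt{C'}$, which is in $\bmo$ with norm controlled by $\norm{\kZ}_{\bmo}$, $L$, and $\kappa$.

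The main (and only) obstacle is the construction of such a $\vp$. The sublinearity $\kappa(u)/u \to 0$ gives $g(u) := L^2(1+\kappa(u))^2 = o(u^2)$, equivalently $u^2/g(u) \to \infty$. A standard de la Vall\'ee-Poussin-type argument then produces a convex, nondecreasing $\vp$ with $\vp(s)/s \to \infty$ and $\vp(g(u)) \leq u^2 + C'$: concretely, select a sparse sequence $0 < u_1 < u_2 < \cdots \to \infty$ along which $u_k^2/g(u_k)$ blows up fast enough that the polygonal path through the points $(g(u_k), u_k^2)$ has strictly increasing slopes diverging to $+\infty$, and take $\vp$ to be the piecewise-linear convex function determined by this path (extended linearly from $(0,0)$). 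The resulting $\vp$ depends only on $L$ and $\kappa$, and monotonicity of $\vp$ combined with the pointwise bound on $\abs{\kA^i_j}$ then transports the inequality to $\vp(\abs{\kA^i_j}^2) \leq \abs{\kZ}^2 + C'$.

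With $\alpha \in \lii$, $\kA \in \bmo$, and $\vp$ in hand, Corollary \ref{cor:sup-lin-exc} gives both the well-posedness of $\bsdeaa$ and the bound $\opnorm{\Saa} \leq C(\norm{\alpha}_{\lii}, \norm{\kA}_{\bmo}, \vp)$. Since each of these three quantities is in turn controlled in terms of $L$, $\kappa$, $\norm{Y}_{\sinf}$, and $\norm{\kZ}_{\bmo}$, the universal constant has the claimed form $\const{\kappa, L, \norm{Y}_{\sinf} + \norm{\kZ}_{\bmo}}$, completing the proof.
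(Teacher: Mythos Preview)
Your approach is exactly the paper's: verify $\alpha\in\lii$, $\kA\in\bmo$, and the off-diagonal superlinearity hypothesis, then invoke Corollary~\ref{cor:sup-lin-exc}. The paper in fact gives no details at all (it just says the lemma ``follows directly'' from that corollary), so your write-up is strictly more informative.

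One small gap worth tightening: your polygonal $\vp$ through the nodes $(g(u_k),u_k^2)$ is convex with superlinear growth, but as written it is not clear that $\vp(g(u))\leq u^2+C'$ for \emph{intermediate} $u\in(u_k,u_{k+1})$; that would require the chord to lie below the graph of $G(s)\coloneqq (g^{-1}(s))^2$, and $G$ need not be concave on each subinterval. The conclusion is nevertheless a standard de~la~Vall\'ee~Poussin fact, and either of the following repairs works cleanly: take $\vp$ to be the convex minorant of $G$ (whose superlinearity follows from $G(s)/s\to\infty$), or set $\psi(s)=\inf_{t\geq s}G(t)/t$ and $\vp(s)=\int_0^s\psi(r)\,dr$, which is convex, nondecreasing, satisfies $\vp\leq G$, and has $\vp(s)/s\geq\tfrac12\psi(s/2)\to\infty$. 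With this adjustment the argument is complete.
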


The first task is to construct an approximation scheme for the driver $f$ .
\begin{definition}
  Given a driver $f$, a sequence $(\fk)$ in $\qua$ is said to be an
  \define{approximation scheme} to $f$ if, for all $k$
  \begin{align*}
    \fk(t,\omega,\cdot,\cdot) \to f(t,\omega,\cdot,\cdot) 
    \text{ uniformly on compacts, for all } (t,\omega).
  \end{align*}
  An approximation scheme is said to be \define{stable} if $\fk(\cdot,y,\kz) =
    f(\cdot, y,\kz)$ for $\abs{y}\leq k$ and $\abs{\kz}\leq k$.
\end{definition}

\begin{proposition} \label{pro:apr-Lip} For each $f\in
  \tri(L,\kappa)\cap\con\cap\mal(L)\cap\apb(\rho,\sam)$  there exists a stable
  approximation scheme $(\fk)$ for $f$ such that
\begin{enumerate}
  \item $\fk\in \tri(L,\kappa)\cap\con\cap\mal(L)$. 
  \item for each $\xi\in\linf$ there exist $\sibm$-solutions $(\Yk, \kZk)$ of
      $\bsde(\xi,\fk)$ such that
  \begin{align}\label{apr-Lip-bnd}
    \sup_{k} \Big( \norm*{\Yk}_{\sinf} + \norm*{\kZk}_{\bmo} \Big) \leq C,\  
    \const{\norm{\xi}_{\linf}, L,\kappa,\rho,\sam}
  \end{align}
\end{enumerate}
\end{proposition}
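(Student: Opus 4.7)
The natural approach is to construct $\fk$ by a careful componentwise truncation of $f$ in its $\kz$-argument, designed to preserve both the triangular structure and, insofar as possible, the $(AB)$ assumption. Specifically, fix a smooth nondecreasing $\phi : [0,\infty) \to [0,\infty)$ with $\phi(r) = r$ on $[0,1]$, $\phi$ eventually constant, and $\phi' \leq 1$; define the componentwise map $\pi_k^j(\kz^j) := k\phi(\abs{\kz^j}/k)\,\kz^j/\abs{\kz^j}$ (extended continuously at $0$), set $\pi_k(\kz) = (\pi_k^j(\kz^j))_j$, and let $\fk(\cdot,y,\kz) := f(\cdot,y,\pi_k(\kz))$. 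Three facts are immediate: stability, since $\pi_k(\kz)=\kz$ on $\set{\abs{\kz}\leq k}$; uniform-on-compacts convergence $\fk\to f$; and preservation of $\con$ and $\mal(L)$ by the chain rule. Preservation of $\tri(L,\kappa)$ follows from the block-diagonal structure of $D\pi_k$ (each $\pi_k^j$ depends only on $\kz^j$) together with the universal bounds $\abs{D\pi_k}\leq C$ and $\abs{\pi_k(\kz)}\leq \abs{\kz}$: by the Remark following the definition of $\tri$, only the estimate $\abs{\partial_{\kz^j}\fk^i}\leq L(1+\kappa(\abs{\kz}))$ for $j>i$ needs checking, and this is a direct chain-rule consequence (with a universal constant absorbed into $L$ if necessary, and using that $\kappa$ is nondecreasing).

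Since $\pi_k$ has bounded range of diameter $O(k)$, $\fk$ is globally Lipschitz in $(y,\kz)$ with a $k$-dependent constant. Classical Lipschitz BSDE theory therefore gives a unique $(\Yk,\kZk)\in \stwo\times\ltt$ solving $\bsde(\xi,\fk)$, and standard It\^o estimates applied to $\abs{\Yk}^2$ (with BDG), combined with $\xi\in\linf$ and the linear growth of $\fk$, promote this to $(\Yk,\kZk)\in\sibm$, albeit with $k$-dependent bounds. The real content of the proposition is making these bounds uniform in $k$.

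For the uniform estimate \eqref{apr-Lip-bnd}, we evaluate $(AB)$ at $\pi_k(\kz)$ to obtain
\[
a_m^T \fk(\cdot,y,\kz) \leq \rho + \tfrac{1}{2}\abs{a_m^T\pi_k(\kz)}^2 \leq \rho + \tfrac{1}{2}\abs{a_m}^2\abs{\kz}^2,
\]
a $k$-independent but slightly weakened form of $(AB)$. The Lyapunov-function construction of \cite{xing2018} can then be rerun --- with a rescaling of the spanning vectors and/or an enlargement of the exponent --- to produce a function of the form $h(y) = \sum_m c_m\exp(\lambda a_m^T y)$, with parameters depending only on $L,\kappa,\rho,\sam$, such that It\^o's formula applied to $h(\Yk)$ yields the bound $\norm{\Yk}_{\sinf}\leq C$ uniformly in $k$. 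The matching uniform $\bmo$-bound on $\kZk$ then follows from yet another application of It\^o to $\abs{\Yk}^2$, the quadratic growth $f\in\qua(L)$, and the $\sinf$-estimate just obtained. The principal technical hurdle is this last step of fitting the weakened $(AB)$ inequality into the Lyapunov scheme of \cite{xing2018}; I anticipate that a suitably chosen enlarged family of spanning vectors, obtained by rescaling $\sam$, will suffice to absorb the passage from $\abs{a_m^T\kz}^2$ to $\abs{a_m}^2\abs{\kz}^2$, so that uniformity in $k$ is automatic once the construction is set up.
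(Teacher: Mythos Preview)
Your construction and the paper's differ in one crucial respect: the paper truncates $\kz$ \emph{as a whole} via
\[
\pi^{(k)}(\kz)=\frac{k}{|\kz|}\,\psi\!\Big(\frac{|\kz|}{k}\Big)\,\kz,
\]
i.e.\ by a scalar factor $\lambda(\kz)\in[0,1]$ applied to the entire vector, rather than componentwise. Because $\pi^{(k)}(\kz)=\lambda(\kz)\,\kz$, one has $a_m^T\pi^{(k)}(\kz)=\lambda(\kz)\,a_m^T\kz$ and hence
\[
\big|a_m^T\pi^{(k)}(\kz)\big|^2\le\big|a_m^T\kz\big|^2,
\]
so the $(AB)$ inequality is inherited \emph{exactly} by each $\fk$. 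With this in hand, the uniform bound follows from a short direct argument: applying It\^o to $R_t=\exp\!\big(-2a_m^T\Yk_t+2\int_0^t\rho\,ds\big)$ shows that $R$ is a positive supermartingale, whence each $a_m^T\Yk$ is uniformly bounded below; positive spanning then yields $\sup_k\|\Yk\|_{\sinf}<\infty$, and a second pass through the same supermartingale gives the $\bmo$ bound on $\kZk$.

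The gap in your proposal is precisely the step you flag as ``the principal technical hurdle.'' Your componentwise map $\pi_k$ is \emph{not} a scalar multiple of $\kz$, so $|a_m^T\pi_k(\kz)|^2\le|a_m^T\kz|^2$ fails in general (take $n=2$, $d=1$, $a_m=(1,1)$, $\kz=(1,-1)$: then $a_m^T\kz=0$ but $a_m^T\pi_k(\kz)$ need not vanish). The ``weakened $(AB)$'' you obtain, $a_m^T\fk\le\rho+\tfrac12|a_m|^2|\kz|^2$, is nothing more than quadratic growth in $\kz$---it carries no directional structure. Your proposed fix by rescaling does not work: for $a_m'=c\,a_m$ the needed inequality becomes $|a_m|^2|\kz|^2\le c\,|a_m^T\kz|^2$, which fails for any $c$ whenever $\kz\perp a_m$. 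Summing exponentials $\sum_m c_m e^{-\lambda a_m^T Y}$ does not rescue the argument either, since the weights $e^{-\lambda a_m^T Y}$ differ across $m$ and cannot be compared without already knowing $\|Y\|_{\sinf}<\infty$. In short, the whole-vector truncation is not a cosmetic choice: it is exactly what makes the uniform $(AB)$-based estimate go through in one line, and your componentwise truncation sacrifices this for a cleaner $\tri$-preservation that is not what drives the bound \eqref{apr-Lip-bnd}.
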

\begin{proof}
  Let $\psi:[0,\infty) \to [0,\infty)$ be a smooth, concave and nondecreasing
  function such that $\psi(x) = x$ for $x\leq 1$ and $\psi(x) = 2$ for $x\geq 2$
  so that, in particular, $\psi'(x) \in [0,1]$ and $\psi(x)\leq x$ for all $x$.
  We define $\pik:(\R^d)^n  \to (\R^d)^n$ by 
\begin{align*}
  \pik(\kz) = \frac{k}{\abs{\kz}} \psi\Big(\frac{\abs{\kz}}{k}\Big) \kz,
\end{align*}
with the understanding that $\pik(0)=0$. 
\begin{align*}
  \fk(\cdot,y,\kz) = f(\cdot, y, \pik(\kz)).
\end{align*}
The properties of $\psi$ emphasized above imply that $\fk \in \tri(L,\kappa)
\cap \con \cap \mal(L)$. According to \cite[Theorem 4.3.1, p.84]{Zhang}, the
Lipschitz BSDE $\bsde(\xi,\fk)$ admits a unique solution $(\Yk, \kZk)$ with
$\kZk\in \lto$. 

We set  $R_t = \exp{- 2 a_m^T \Yk_t +\int_0^t 2 \rho_s\, du}$ so that
\begin{align*}
  d R = 
  R\Big( 2 a_m^T \fk(\cdot, \Yk, \kZk) - 2 \rho - 2 \abs*{a_m^T \kZk}^2\Big)\, dt 
  - 2 a_m^T \kZk\, d\kB.
  \end{align*}
Since $f \in \apb(\rho,\sam)$ and $\kZk\in \lto$ we have 
\begin{equation}\label{Ito-R}
  \begin{split}
    2 a_m^T\fk(\cdot, y, \kz) &= 2 a_m^T f(\cdot, y, \pik(\kz)) \leq 2 \rho+
     \abs{a_m^T \pik(\kz)}^2 \\ &\leq 2 \rho+
      \abs{a_m^T \kz}^2,
  \end{split}
\end{equation}
which leads to  two conclusions. 

The first one is that the process $R$ is a positive supermartingale with $R_T
\in \linf$. Consequently the process $a_m^T \Yk$ admits a uniform lower bound
which depends only on $\norm{\rho}_{\loi}$ and $\norm{\xi}_{\linf}$. Since
$\sam$ positively span $\R^n$, a uniform bound transfers to $\abs{\Yk}$ and we
can conclude that $\sup \norm{\Yk}_{\sinf}<\infty$. 

Having established the boundedness of $\Yk$, we can extract more out of
\eqref{Ito-R}. Indeed, it follows from \eqref{Ito-R} that $R_t - \tot \int_0^t
R_s \abs*{a_m^T \kZk_s}\, ds$ is a supermartingale, as well. Since $R$ is now
known to be bounded from above, we conclude that the quantity $\bE_t[ \int_t^T
\abs*{a^T_m \kZk}^2\, dt]$ admits a bound in terms of $\sup \norm*{\Yk}_{\sinf}$
only. As above, since $\sam$ positively span $\R^n$, we can conclude that $\sup
\norm*{\kZk}_{\bmo}<\infty$, and complete the proof.
\end{proof}

\begin{proposition} \label{pro:smo-tri-exist} Suppose that $\xi\in \doi$ and
  $f\in\tri(L,\kappa)\cap\con\cap\mal(L)\cap\apb(\rho,\sam)$. Then \eqref{bsde}
  admits a solution $(Y,\kZ) \in \sibm$ with
  \begin{align}\label{smo-tri-bnd}
    \norm{Y}_{\sinf}+\norm*{\kZ}_{\lii} \leq C, 
    \ewhere \const{L,\kappa,\rho,\sam, \norm{\xi}_{\linf}}.
  \end{align}
\end{proposition}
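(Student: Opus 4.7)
The plan is to execute the Briand-Elie approximation argument, using Proposition~\ref{pro:apr-Lip} as the starting point. Let $(\fk, \Yk, \kZk)$ be the sequence produced there, so that $(\Yk, \kZk)\in \bsde(\xi, \fk)$ with $\sup_k \bPn{ \norm{\Yk}_{\sinf} + \norm{\kZk}_{\bmo} }\leq C$ universally. Because the approximation scheme is stable, once we produce a universal bound $\sup_k \norm{\kZk}_{\lii} \leq C$, the proof is finished: for any $k$ exceeding that constant we have $\fk(\cdot, \Yk, \kZk) = f(\cdot, \Yk, \kZk)$ along paths, so $(\Yk, \kZk)$ already solves $\bsde(\xi, f)$ with the bound \eqref{smo-tri-bnd}. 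Thus everything reduces to establishing a universal $\lii$-bound on $\kZk$.

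For this we differentiate $\bsde(\xi, \fk)$ in the Malliavin sense. The truncation $\pik$ is bounded and $1$-Lipschitz, so $\fk$ is globally Lipschitz in $(y, \kz)$ (locally uniformly in $y$), and since $f\in\mal(L)$ a chain-rule argument shows that $\fk$ inherits Malliavin regularity with the same constant $L$. With $\xi\in\doi$, standard Malliavin calculus for Lipschitz BSDEs then applies: $(\Yk, \kZk)$ is Malliavin differentiable, $\kZk_t = D_t \Yk_t$ a.e., and for each $u\in[0,T]$ the pair $(D_u\Yk, D_u\kZk)$ solves, on $[u,T]$, the linear BSDE
\begin{align*}
    D_u\Yk = D_u\xi + \ict \bPn{\alphak D_u\Yk + \kAk D_u\kZk + \gamma^k_u}\, dt - \ict D_u\kZk\, d\kB,
\end{align*}
where $\alphak = \pd{\fk}{y}(\cdot,\Yk,\kZk)$, $\kAk = \pd{\fk}{\kz}(\cdot,\Yk,\kZk)$, and $\gamma^k_u(t) = (D_u\fk)(t,\Yk_t,\kZk_t)$.

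The uniform $\sibm$-bound on $(\Yk,\kZk)$ combined with $\fk\in\con\cap\tri(L,\kappa)$ lets Lemma~\ref{lem:tri-dif-exc} act on the linearization $(\alphak, \kAk)$, yielding $\opnorm{S_{\alphak,\kAk}}\leq C$ universally; this is the step in which the triangular structure (via sublinearity of $\kappa$ and Corollary~\ref{cor:sup-lin-exc}) is used decisively. On the input side, $\abs{Df}\leq L$ and composition with $\pik$ give $\norm{\gamma^k_u}_{\lii}\leq L$, hence $\norm{\gamma^k_u}_{\bmoh}\leqc 1$, while $\norm{D_u\xi}_{\linf}\leq \norm{D\xi}_{\lii}$. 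Feeding these inputs through $S_{\alphak,\kAk}$ yields
\begin{align*}
    \esssup_{u\leq t\leq T}\abs{D_u\Yk_t} \leqc \norm{D_u\xi}_{\linf} + \norm{\gamma^k_u}_{\bmoh} \leqc 1 + \norm{D\xi}_{\lii},
\end{align*}
uniformly in $k$ and $u$. Specializing $t=u$ and invoking $\kZk_t = D_t\Yk_t$ delivers the universal $\lii$-bound on $\kZk$ and, via the stability argument above, completes the proof. The main obstacle is the Malliavin-differentiation step: one must carefully verify that the approximate drivers $\fk$ are regular enough to differentiate $\bsde(\xi, \fk)$ in the Malliavin sense and to identify $\kZk_t$ with $D_t\Yk_t$, a step that relies essentially on the $\con\cap\mal(L)$ properties recorded in Proposition~\ref{pro:apr-Lip}.
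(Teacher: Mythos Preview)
Your proposal is correct and follows essentially the same approach as the paper: approximate via Proposition~\ref{pro:apr-Lip}, Malliavin-differentiate each Lipschitz equation to obtain a linear BSDE with coefficients $(\alphak,\kAk)$, apply Lemma~\ref{lem:tri-dif-exc} to bound $D_u\Yk$ uniformly, and then use $\kZk_t=D_t\Yk_t$ together with stability of the scheme to conclude. The only cosmetic difference is that the paper cites \cite[Proposition~5.3]{karoui} explicitly for the Malliavin-differentiation step, whereas you invoke it as ``standard Malliavin calculus for Lipschitz BSDEs''.
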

\begin{proof}
  Let $(\fk)$ be the approximation scheme for $f$ as in Proposition
  \ref{pro:apr-Lip}, and let the solutions $(\Yk,\kZk) \in \bsde(\xi,\fk)$
  satisfy \eqref{apr-Lip-bnd}. Given $\theta \in [0,T)$ we take  the
  Malliavin derivative $D_{\theta}$ of $\Yk$ (this is justified for example by
  Proposition 5.3 of \cite{karoui}) to find that on $[\theta,T]$, $D_{\theta}
  \Yk$ satisfies
    \begin{multline}\label{diff-equ}
      D_{\theta} \Yk = D_{\theta} \xi + \ict \Big( \alphak D_{\theta}\Yk +
      \kAk D_{\theta} \kZk + \betak\Big)\, dt - \ict D_{\theta} \kZk \, d\kB,
    \end{multline}
  where 
  \begin{align*}
    (\alphak, \kAk, \betak)  = 
    \Big(\pd{\fk}{y}, \pd{\fk}{\kz}, D_{\theta}f \Big)(\cdot, \Yk, \kZk)
   \end{align*}

  We interpret \eqref{diff-equ} as a linear equation for $(D_{\theta} \Yk,
  D_{\theta} \kZk)$ and note that $\bsdeaa$ is well-posed by  Lemma \ref{lem:tri-dif-exc}. Moreover, we have
  the following bound
  \begin{align*}
    \norm*{D_{\theta}\Yk}_{S^{\infty}} \leqc
    \norm{D_{\theta} \xi}_{L^{\infty}} + 
    \norm*{\betak}_{\bmoh} \leqc
  1+\norm{D_{\theta} \xi}_{L^{\infty}},
  \end{align*}
  where $\const{L,\kappa,\rho,\sam, \norm{\xi}_{\linf}}$ and where we used the
  fact that $f\in \mal(L)$ in for the last inequality. 

  Together with the identification $\kZk_{\theta} = D_{\theta} \Yk_{\theta}$
  (see Proposition 5.3 of \cite{karoui} for a precise statement) this shows that
  the sequence $\set{\norm*{\kZk}_{\lii}}$ is bounded. Hence, by the stability
  of $(\fk)$, we have 
  \begin{align*}
    f(\cdot, \Yk, \kZk) = \fk(\cdot, \Yk, \kZk), \efor k \text{ large enough.}
  \end{align*}
  Hence, if we pick a sufficiently large $k$, the pair $(Y,\kZ)\coloneqq (\Yk,
    \kZk)$  solves the original BSDE \eqref{bsde} and admits the bound
    \eqref{smo-tri-bnd}.
  \end{proof}

\begin{remark} \label{rmk:approx}
Actually, the proof of Proposition \ref{pro:smo-tri-exist} shows the following: if $\xi \in \doi$ and $f \in \tri(L,\kappa) \cap \con \cap \mal(L)$, and if there exists a stable approximation scheme $(f^{(k)})$ for $f$ such that
\begin{enumerate}
  \item $\fk\in \tri(L,\kappa)\cap\con\cap\mal(L)$. 
  \item for each $\xi\in\linf$ there exist $\sibm$-solutions $(\Yk, \kZk)$ of
      $\bsde(\xi,\fk)$ such that
  \begin{align}
    \sup_{k} \Big( \norm*{\Yk}_{\sinf} + \norm*{\kZk}_{\bmo} \Big) \leq C
  \end{align}
\end{enumerate}
then there exists a solution $(Y,\kZ) \in \sinf \times \bmo$ to \eqref{bsde} such that 
\begin{align*}
    \norm{Y}_{\sinf} + \norm{\kZ}_{\bmo} \leq C.
\end{align*}
Thus we see that the only role of the condition (AB) is to guarantee the existence of such a stable approximation scheme - if the scheme can be produced by another argument, existence of a solution is still guaranteed. 
\end{remark}

\subsection{Existence and uniqueness in the general triangular case}
Having treated the smooth case in Proposition \ref{pro:smo-tri-exist} above, we turn to
the general case. We start with two stability estimates; the first one implies
uniqueness, while then second one will be used in the existence proof below. 
\begin{proposition}  \label{pro:stab-sibm} Suppose that for $i=1,2$, $f_i \in
  \tri(L,\kappa)$, $\xi_i \in \linf$ and that $(Y_i, \kZ_i) \in \sibm$ solve
  $\bsde(f_i,\xi_i)$, respectively. Then
  \begin{multline}\label{stab-sibm-est}
    \norm{Y_2 - Y_1}_{\sinf} +
    \norm{\kZ_2 - \kZ_1}_{\bmo} \leqc
    \norm{\xi_2 - \xi_1}_{\linf}  \\ +
    \norm{f_2(\cdot, Y_1, \kZ_1) - f_1(\cdot, Y_1, \kZ_1) }_{\bmoh},
    \end{multline}
  where $\const{L, \kappa, \norm{Y_i}_{\sinf}, \norm{\kZ_i}_{\bmo}, i=1,2}$. 
  
  \medskip
  
  In particular, for $\xi\in\linf$ and $f\in \tri$, $\bsde(\xi,f)$ has at most
  one $\sinf\times \bmo$-solution.
  \end{proposition}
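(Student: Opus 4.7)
The plan is to linearize the difference of the two BSDEs and apply the well-posedness theory for linear BSDEs developed earlier in the section. Set $\Delta Y = Y_2 - Y_1$, $\Delta \kZ = \kZ_2-\kZ_1$, $\Delta\xi = \xi_2 - \xi_1$, and $\beta = f_2(\cdot, Y_1,\kZ_1) - f_1(\cdot,Y_1,\kZ_1)$. Writing $f_2(\cdot,Y_2,\kZ_2) - f_1(\cdot,Y_1,\kZ_1) = \bigl[f_2(\cdot,Y_2,\kZ_2) - f_2(\cdot,Y_1,\kZ_1)\bigr] + \beta$ and applying the finite-difference identity from Section \ref{sse:notation} to the bracketed term, subtracting the two BSDEs yields
\begin{align*}
    \Delta Y = \Delta\xi + \ict \bigl(\alpha\,\Delta Y + \kA\,\Delta\kZ + \beta\bigr)\, dt - \ict \Delta\kZ\, d\kB,
\end{align*}
where $\alpha = \pD{f_2}{y}$ and $\kA = \pD{f_2}{\kz}$ (with finite differences taken between $(Y_1,\kZ_1)$ and $(Y_2,\kZ_2)$).

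The next step is to verify that the pair $(\alpha,\kA)$ satisfies the hypotheses of Corollary \ref{cor:sup-lin-exc}. Since $f_2\in\tri(L,\kappa)$, the triangular estimate is a pointwise statement that transfers verbatim to the finite differences: we obtain $\abs{\alpha}\leq L$, so $\alpha\in\lii$; we get $\kA \in \bmo$ with a norm controlled in terms of $L$ and $\norm{Y_i}_{\sinf}+\norm{\kZ_i}_{\bmo}$; and for each pair $j>i$,
\begin{align*}
    \abs{\kA^i_j} \leq L\bigl(1+\abs{Y_1}+\abs{Y_2}+\kappa(\abs{\kZ_1})+\kappa(\abs{\kZ_2})\bigr).
\end{align*}
Since $Y_1,Y_2\in\sinf$ and $\kappa$ is sublinear, $\kappa(x)^2/x^2\to 0$ as $x\to\infty$, so we may choose a convex superlinear $\vp:[0,\infty)\to[0,\infty)$ such that $\vp\bigl(C(1+\kappa(x)^2)\bigr)\leq 1+x^2$ for all $x\geq 0$; this makes $\vp(\abs{\kA^i_j}^2)^{1/2}\in\bmo$ for every $j>i$, with norm depending only on $L$, $\kappa$, $\norm{Y_i}_{\sinf}$, and $\norm{\kZ_i}_{\bmo}$.

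With these ingredients in hand, Corollary \ref{cor:sup-lin-exc} gives that $\bsdeaa$ is well-posed and $\opnorm{\Saa}\leq C$ for a constant $C$ depending only on the parameters listed in the proposition. Since $(\Delta Y,\Delta\kZ)\in\sibm$ already, the a priori bound from well-posedness delivers
\begin{align*}
    \norm{\Delta Y}_{\sinf} + \norm{\Delta\kZ}_{\bmo} \leqc \norm{\Delta\xi}_{\linf} + \norm{\beta}_{\bmoh},
\end{align*}
which is exactly \eqref{stab-sibm-est}. Uniqueness follows by specializing to $f_1=f_2$, $\xi_1=\xi_2$, which makes the right-hand side vanish.

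The main obstacle is the construction of $\vp$ in the second paragraph, i.e., upgrading the sublinearity of $\kappa$ into a concrete growth function suitable for Corollary \ref{cor:sup-lin-exc}. One expects this to be a short but slightly technical lemma: pick any $\vp$ that is convex, superlinear at infinity, and satisfies $\vp(u)\leq 1 + \kappa^{-1}(\sqrt{u})^2$ (interpreted appropriately if $\kappa$ is not strictly increasing); alternatively, one can argue by a density/diagonal argument since the bound on $\kA^i_j$ has an explicit form. Everything else in the proof is routine linearization plus invocation of Section \ref{sec:linear}.
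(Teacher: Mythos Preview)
Your proof is correct and follows essentially the same route as the paper: linearize the difference via finite differences to obtain a linear BSDE with coefficients $(\alpha,\kA)$, read off the triangular bounds on $\kA^i_j$ from $f_2\in\tri(L,\kappa)$, and invoke Corollary~\ref{cor:sup-lin-exc} to get well-posedness and the a-priori estimate. The only difference is that you spell out the construction of the superlinear $\vp$ from the sublinear $\kappa$, which the paper leaves implicit; your sketch via $\vp(u)\approx \kappa^{-1}(\sqrt{u})^2$ (convexified if necessary) is the right idea.
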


  \begin{proof}
  We remind the reader of the finite-difference notation introduced in Section
  \ref{sse:notation} which we apply here with $(\cdot, Y_i, \kZ_i)$ playing the
  role of $x_i=(y_i, \kz_i)$, $i=1,2$. If we set $\Delta Y = Y_2 - Y_1$ and
  $\Delta \kZ = \kZ_2 - \kZ_1$, it follows by telescoping that the processes
  $\alpha$ and $\kZ$ defined by 
  \[ \alpha = \pD{f_2}{y}  \eand \kA = \pD{f_2}{\kz}\] satisfy
  \[ f_2(\cdot,Y_2, \kZ_2) - f_2(\cdot, Y_1, \kZ_1) = \alpha \Delta Y + \kA
  \Delta \kZ.\] Therefore, with $\beta = f_2(\cdot, Y_1,\kZ_1) - f_1(\cdot,
  Y_1,\kZ_1)$, we have
  \[ \Delta Y = \ict \Big(\alpha \Delta Y + \kA \Delta \kZ + \beta\Big)\, dt -
  \ict \Delta \kZ\, d\kB.\]
  
  Thanks to the fact that $f_2\in \tri(L,\kappa)$, the process $\alpha$ is
  bounded by $L$, while 
  \begin{align} \label{est-diff}
    \abs{\kA^i_j} \leq \begin{cases}
    L\big(1+\abs{Y_1}+\abs{Y_2}+\abs{\kZ_1}+\abs{\kZ_2}\big), & j\leq i, \\
    L\big(1+\abs{Y_1}+\abs{Y_2}+\kappa(\abs{\kZ_1})
    +\kappa(\abs{\kZ_2})\big), & j>i.
  \end{cases}
  \end{align}
  It remains use Corollaries \ref{cor:sup-lin-exc} and \ref{cor:one-slic-exc}
  to conclude that the $\bsdeaa$ is well-posed with \[
  \opnorm{\Saa}\leq C,\ \const{L, \kappa, \norm{Y^1}_{\sinf},
  \norm{Y^1}_{\sinf}, \norm{\kZ^1}_{\bmo}, \norm{\kZ^2}_{\bmo}},\] which, in
  turn, implies the inequality \eqref{stab-sibm-est}. 
  \end{proof}

  \begin{proposition}  \label{pro:stab-sqlq} Suppose that for $i=1,2$, $f_i \in
    \tri(L,\kappa)$, $\xi_i \in \linf$ and that $(Y_i, \kZ_i) \in \sibm$ solve
    $\bsde(f_i,\xi_i)$, respectively. Then there exists a constant $q^* =
    q^*(L,\kappa, \norm{Y_i}_{\sinf}, \norm{\kZ_i}_{\bmo}, i=1,2)$ with the
    following property: for each $q>q^*$ we have
    \begin{multline}\label{stab-sqlq-est}
      \norm{Y_2 - Y_1}_{\sque} +
      \norm{\kZ_2 - \kZ_1}_{\ltq} \leqc
      \norm{\xi_2 - \xi_1}_{\lque} \\ +
      \norm{f_2(\cdot, Y_1, \kZ_1) - f_1(\cdot, Y_1, \kZ_1) }_{\loq},
      \end{multline}
    where $\const{q, L, \kappa, \norm{Y_i}_{\sinf}, \norm{\kZ_i}_{\bmo},
    i=1,2}$.
    \end{proposition}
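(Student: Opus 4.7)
The plan is to mimic the proof of Proposition \ref{pro:stab-sibm} verbatim, substituting the $L^q$ linear-BSDE estimate provided by Proposition \ref{pro:lqexc} in place of Corollary \ref{cor:sup-lin-exc}. Concretely, I set $\Delta Y = Y_2 - Y_1$, $\Delta \kZ = \kZ_2 - \kZ_1$, and use the finite-difference notation of Section \ref{sse:notation} with $x_i = (Y_i, \kZ_i)$ to write
\[
    \alpha = \pD{f_2}{y}, \qquad
    \kA = \pD{f_2}{\kz}, \qquad
    \beta = f_2(\cdot, Y_1, \kZ_1) - f_1(\cdot, Y_1, \kZ_1),
\]
so that, just as in Proposition \ref{pro:stab-sibm}, $(\Delta Y, \Delta \kZ)$ solves the linear BSDE \eqref{lin-bsde} with terminal value $\xi_2 - \xi_1$ and driver $\beta$.

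The task then reduces to checking that the triple $(\alpha, \kA, \beta)$ fits the hypotheses of Proposition \ref{pro:lqexc}. Since $f_2 \in \tri(L,\kappa)$ is $L$-Lipschitz in $y$, we have $\norm{\alpha}_{\lii} \leq L$. The componentwise bound \eqref{est-diff} from the proof of Proposition \ref{pro:stab-sibm} gives $\kA \in \bmo$ with norm bounded by a universal constant depending on $L, \norm{Y_i}_{\sinf}, \norm{\kZ_i}_{\bmo}$; moreover, above the diagonal
\[
    \abs{\kA^i_j}^2 \leqc 1 + \kappa(\abs{\kZ_1})^2 + \kappa(\abs{\kZ_2})^2 \efor j>i.
\]
One then produces a nondecreasing convex function $\phi : [0,\infty) \to [0,\infty)$, depending only on $\kappa$, with $\phi(u)/u \to \infty$ and $\phi(\kappa(x)^2) \leqc 1 + x^2$ for all $x\geq 0$. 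This is possible exactly because $\kappa$ is sublinear, so $\kappa(x)^2/x^2 \to 0$; one can, for instance, take the convex envelope of $y \mapsto \min\bigl(y \cdot g(y), y^2\bigr)$ for a suitable slowly-growing $g$ determined by $\kappa^{-1}$. With such a $\phi$ in hand, $\phi(\abs{\kA^i_j}^2) \leqc 1 + \abs{\kZ_1}^2 + \abs{\kZ_2}^2 \in \bmo^{1/2}$ for $j>i$, so $\phi(\abs{\kA^i_j}^2)^{1/2} \in \bmo$.

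With these verifications in place, Proposition \ref{pro:lqexc} applies and yields a threshold $q^* = q^*(\norm{\kA}_{\bmo}, \phi)$ -- and hence, in terms of the primary data, $q^* = q^*(L,\kappa,\norm{Y_i}_{\sinf}, \norm{\kZ_i}_{\bmo})$ -- such that for every $q>q^*$,
\[
    \norm{\Delta Y}_{\sque} + \norm{\Delta \kZ}_{\ltq} \leqc
    \norm{\xi_2-\xi_1}_{\lque} + \norm{\beta}_{\loq},
\]
with the implied constant depending only on $q, L, \kappa, \norm{Y_i}_{\sinf}, \norm{\kZ_i}_{\bmo}$. This is precisely \eqref{stab-sqlq-est}.

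The one place requiring actual thought, as opposed to bookkeeping, is the construction of $\phi$ from the sublinear modulus $\kappa$; everything else is a mechanical transcription of the $\sibm$-argument into $\sque\times\ltq$. I anticipate no other substantive obstacle, since Proposition \ref{pro:lqexc} is designed to absorb exactly the same structural input (boundedness of $\alpha$, $\bmo$ bound on $\kA$, and the convex-superlinear envelope above the diagonal) that drove the $\sibm$ case.
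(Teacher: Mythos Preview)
Your proposal is correct and follows essentially the same route as the paper: linearize via finite differences exactly as in Proposition~\ref{pro:stab-sibm}, then feed the resulting coefficients into Proposition~\ref{pro:lqexc}. Your explicit discussion of how to manufacture the convex superlinear $\phi$ from the sublinear modulus $\kappa$ is more detailed than what the paper writes down, but this is the same implicit step underlying the applications of Corollary~\ref{cor:sup-lin-exc} and Proposition~\ref{pro:lqexc} throughout the section.
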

    \begin{proof}
     The proof is a straightforward application of Proposition \ref{pro:lqexc}, together with the linearization technique already employed several times in this section.
    \end{proof}
  Next, we construct a smooth approximation scheme for the driver $f$.
  
  \begin{remark}
  Note that Propositions \ref{pro:stab-sibm} and \ref{pro:stab-sqlq} do no require any smoothness, only that $f_i \in \tri$. In particular, Proposition \ref{pro:stab-sibm} shows that if $f \in \tri$ and $\xi \in \linf$, then there is at most one solution $(Y,\kZ) \in \sinf \times \bmo$ to $\bsde(f,\xi)$
  \end{remark}

\begin{proposition}\label{pro:apr-smo} Given $(L,\rho,\sam)$ there exists
  $(L',\rho',\samp)$ with the following property: each $f\in \tri(L,\kappa) \cap
  \mal(L) \cap \apb(\rho,\sam)$ admits  
  an approximation scheme $(\fk)$  such that $\fk \in \tri(L',\kappa) \cap \con
  \cap \mal(L) \cap \apb(\rho', \samp)$,
\end{proposition}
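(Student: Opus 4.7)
The plan is to use mollification of $f$ in the spatial variables $(y,\kz)$. Fix a symmetric smooth non-negative mollifier $\eta : \R^n \times (\R^d)^n \to \R$ supported in the unit ball with $\int \eta = 1$, and let $\eta_k$ be its standard rescaling to support in the ball of radius $1/k$. Define
\[ \fk(t,\omega,y,\kz) = \int f(t,\omega,y-u,\kz-w) \, \eta_k(u,w) \, du\, dw. \]
Then $\fk$ is automatically $C^{\infty}$ in $(y,\kz)$ and $\fk \to f$ uniformly on compacts in $(y,\kz)$ for each $(t,\omega)$ by the continuity of $f$, while measurability in $(t,\omega)$ is inherited. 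So $\fk \in \con$ and $(\fk)$ is an approximation scheme.

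To check $\fk \in \tri(L',\kappa)$, I would average the triangular bound of $f$: the difference $\fk^i(\cdot,y',\kz') - \fk^i(\cdot,y,\kz)$ equals an $\eta_k$-weighted integral of the corresponding differences at $(y'-u,\kz'-w)$ and $(y-u,\kz-w)$, and since $|u|,|w| \leq 1$ on the support of $\eta_k$, each growth factor $|y|,|y'|,|\kz|,|\kz'|$ is inflated by at most a constant. For $j > i$, the subadditivity $\kappa(|\kz'-w|) \leq \kappa(|\kz'|) + \kappa(1)$ of the concave function $\kappa$ absorbs the correction into a new constant $L' = L'(L,\kappa)$. The Malliavin regularity $\mal(L)$ is preserved because the Malliavin derivative commutes with convolution in $(y,\kz)$, giving
\[ D\fk(\cdot,y,\kz) = \int Df(\cdot,y-u,\kz-w) \, \eta_k(u,w) \, du\, dw, \]
from which the bounds $|D\fk| \leq L$ and Lipschitz constant $L$ for $D\fk$ in $(y,\kz)$ follow by averaging.

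The crucial step, and the only delicate one, is preservation of the (AB) condition. For each $m$, we have
\[ a_m^T \fk(t,\omega,y,\kz) \leq \int \Bigl( \rho(t,\omega) + \tfrac{1}{2} |a_m^T(\kz - w)|^2 \Bigr) \eta_k(u,w) \, du\, dw. \]
Because $\eta_k$ is symmetric about the origin, $\int w \, \eta_k(u,w)\, du\, dw = 0$, so the cross term in the expansion $|a_m^T(\kz-w)|^2 = |a_m^T\kz|^2 - 2(a_m^T\kz)\cdot(a_m^T w) + |a_m^T w|^2$ integrates to zero, leaving
\[ \int |a_m^T(\kz-w)|^2 \, \eta_k \, du\, dw = |a_m^T\kz|^2 + \int |a_m^T w|^2 \, \eta_k\, du\, dw \leq |a_m^T\kz|^2 + |a_m|^2/k^2. \]
Setting $\rho' = \rho + \tfrac{1}{2}\max_m |a_m|^2$ and $\samp = \sam$ yields $\fk \in \apb(\rho',\samp)$ uniformly in $k\geq 1$. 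The main obstacle in the whole argument is this final step: a naive mollification would produce a factor $(1+\varepsilon)$ in front of $|a_m^T\kz|^2$ and destroy the sharp $\tfrac{1}{2}$ coefficient required by (AB). The symmetry of $\eta$, which kills the linear cross term, is precisely what allows the error to be purely additive and absorbed into a slightly enlarged $\rho$.
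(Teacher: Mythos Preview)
Your argument is essentially the same as the paper's (mollification in $(y,\kz)$), and it is correct. Two points deserve comment.

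First, your treatment of the (AB) condition is slightly sharper than the paper's. You exploit the \emph{symmetry} of the mollifier to kill the cross term $-2(a_m^T\kz)(a_m^T w)$, so the error is purely additive and you may keep $\samp = \sam$ with $\rho' = \rho + \tfrac{1}{2}\max_m|a_m|^2$. The paper does not assume symmetry; it simply bounds $a_m^T\fk$ by $\sup_{|\kz'-\kz|\leq 1}\tfrac{1}{2}|a_m^T\kz'|^2$, which costs the sharp constant and yields $a_m^T\fk \leq \rho + |a_m|^2 + |a_m^T\kz|^2$. To recover the $\tfrac{1}{2}$, the paper then rescales the spanning vectors to $\samp = \{2a_m\}$. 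Your observation that symmetry avoids this rescaling is a genuine, if minor, improvement.

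Second, there is one technical step you pass over too quickly: the claim that the Malliavin derivative commutes with the convolution integral, i.e.\ that $D\fk(\cdot,y,\kz) = \int Df(\cdot,y-u,\kz-w)\,\eta_k(u,w)\,du\,dw$. This is not automatic, since $D$ is only a closed unbounded operator and one must justify passing it under the integral sign. The paper isolates this as a separate lemma: one approximates the integral by Riemann sums, uses the Lipschitz dependence of $D_\theta f(\cdot,y,\kz)$ on $(y,\kz)$ from the $\mal(L)$ hypothesis to show the derivatives of the Riemann sums converge in $L^2$, and then invokes closedness of $D$. You should at least flag this step and indicate why the $\mal(L)$ assumption makes it work.
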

Our proof below relies on the following auxillary result:
\begin{lemma} \label{lem:malliavin} Let $F : \R^m \times \Omega \to \R^n$ be a
  bounded measurable map such that
  \begin{enumerate}
    \item  for each $\omega$,  the map $x \mapsto F(x,\omega)$ is continuous and
    compactly supported. 
    \item  for each $x$, we have $\omega \mapsto F(x,\omega) \in \D$ and the map
  $(\theta, \omega, x) \mapsto D_{\theta}F(x, \omega)$ admits a measurable
  version with the following property 
  \begin{align*}
      \abs{D_{\theta} F(y, \omega ) - D_{\theta} F(x, \omega)} 
      \leq L\abs{y - x}, \text{ for some  $L\geq 0$.}
  \end{align*}
\end{enumerate} 
  Then, the random variable $\int_{\R^m} F(x, \cdot)\, dx$ is in $\D$, and 
  \begin{align*}
      D_{\theta} \int_{\R^m} F(x, \omega) dx = 
      \int_{\R^m} D_{\theta } F(x, \omega) dx, \ d\theta\times d\bP\text{-a.e.}
  \end{align*}
  
  \end{lemma}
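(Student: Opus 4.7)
The plan is to approximate $I(\omega) \coloneqq \int_{\R^m} F(x, \omega)\,dx$ by Riemann sums on a fixed deterministic compact, invoke the closedness of the Malliavin derivative operator $D$, and then exhaust $\R^m$ by an increasing sequence of deterministic compacts. Since $D$ is a closed operator on $\ltwo(\Omega)$ with values in $\ltwo([0,T] \times \Omega)$, it suffices to exhibit a sequence $S_N$ of random variables in $\D$ with $S_N \to I$ in $\ltwo(\Omega)$ and $DS_N$ convergent in $\ltwo([0,T]\times \Omega)$.

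Fix a deterministic compact $K \subset \R^m$ and a sequence of partitions $\{Q_{N,i}\}_{i=1}^{I_N}$ of $K$ into cubes with $\max_i \diam(Q_{N,i}) \to 0$. Choose sample points $x_{N,i} \in Q_{N,i}$ and set
\begin{align*}
  S_N^K = \sum_{i=1}^{I_N} F(x_{N,i}, \cdot)\, \abs{Q_{N,i}}.
\end{align*}
Each $S_N^K$ is a finite linear combination of elements of $\D$ and hence belongs to $\D$, with
\begin{align*}
  D_\theta S_N^K = \sum_{i=1}^{I_N} D_\theta F(x_{N,i}, \cdot)\, \abs{Q_{N,i}}.
\end{align*}
By the continuity of $x\mapsto F(x,\omega)$ and the boundedness of $F$, $S_N^K(\omega) \to \int_K F(x,\omega)\,dx$ pointwise in $\omega$; dominated convergence (with dominant $\norm{F}_\infty \abs{K}$) yields convergence in $\ltwo(\Omega)$. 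For $D S_N^K$, the Lipschitz hypothesis gives the pointwise-in-$(\theta,\omega)$ estimate
\begin{align*}
  \abs*{D_\theta S_N^K(\omega) - \int_K D_\theta F(x,\omega)\,dx}
  \leq L\, \abs{K}\, \max_i \diam(Q_{N,i}) \to 0,
\end{align*}
which is in particular convergence in $\ltwo([0,T]\times \Omega)$. Closedness of $D$ then delivers $\int_K F(x,\cdot)\,dx \in \D$ with Malliavin derivative $\int_K D_\theta F(x,\cdot)\,dx$, $d\theta\otimes d\bP$-a.e.

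Finally, choose an increasing sequence of deterministic compacts $K_n \uparrow \R^m$. For each $\omega$, the support of $F(\cdot,\omega)$ is eventually contained in $K_n$, so $\int_{K_n} F(x,\omega)\,dx = I(\omega)$ and $\int_{K_n} D_\theta F(x,\omega)\,dx = \int_{\R^m} D_\theta F(x,\omega)\,dx$ for all $n$ large enough (depending on $\omega,\theta$). Thus no further limit is actually required in the $K$-variable once the support is captured; the identity $D_\theta I = \int_{\R^m} D_\theta F(x,\cdot)\,dx$ holds a.e. The main subtlety, and the step that drives the choice of hypotheses, is the Lipschitz-in-$x$ assumption on $D_\theta F$: it upgrades the otherwise merely measurable behaviour of the Malliavin derivative in $x$ to a genuine uniform Riemann-sum estimate, which is what closes the argument; without it one would have no way to pass the derivative inside the integral while only assuming $F(x,\cdot) \in \D$ pointwise in $x$.
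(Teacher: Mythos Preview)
Your core argument---Riemann sums on a fixed deterministic compact together with the closedness of $D$---is exactly the paper's approach; the paper simply fixes a single cube $[-K,K]^m$ containing the support of $F(\cdot,\omega)$ for every $\omega$ and carries out the Riemann-sum limit there, with no exhaustion step.

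The one place you diverge is the final exhaustion over $K_n\uparrow\R^m$, and as written it is not quite closed. You have shown $\int_{K_n}F(x,\cdot)\,dx\in\D$ with derivative $\int_{K_n}D_\theta F(x,\cdot)\,dx$, and then argue that for each $\omega$ these eventually equal $I(\omega)$ and $\int_{\R^m}D_\theta F(x,\omega)\,dx$. But pointwise-in-$\omega$ eventual equality does not by itself give the $\ltwo(\Omega)$ and $\ltwo([0,T]\times\Omega)$ convergence that the closedness of $D$ requires; the threshold $n$ at which the support is captured depends on $\omega$, and without a uniform bound on the supports (or an integrable dominating function) you cannot pass to the limit. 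If you read the compact-support hypothesis as uniform in $\omega$---which is how the paper reads it, and which is what holds in the intended application to mollification by a fixed kernel---then your exhaustion step is simply unnecessary and your proof coincides with the paper's.
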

  
  \begin{proof}
    Let $K$ be such that $f(x,\omega) = 0$ for $x \not \in [-K,K]^m$, and let
  $\set{ \sQ_j = \set{Q_{j,1},....,Q_{j,k_j}}}_{j \in \N}$ be a sequence of
  partitions of $[-K,K]^m$ by almost disjoint rectangles such that $\sQ_{j+1}$
  refines $\sQ_j$ and $\max_{i} \diam(\sQ_{j,i}) \to 0$. For each $j$ and $i
  \leq k_j$, let $x_{j,i}$ be a point in $\sQ_{j, i}$. Since $x \mapsto F(x,
  \omega)$ is continuous, we have
  \begin{align*}
      \int_{\R^m} F(x, \omega) dx = \lim_{j \to \infty} 
      \sum_{i = 1}^{k_j} \abs{Q_{j,i}} F(x_{j,i}, \omega)
  \end{align*}
  for each $\omega$. Define $X_k : \Omega \to \R^n$ by $ X_k(\omega) = \sum_{i
   =1}^{k_j} \abs{Q_{j,i}} F(x_{j,i}, \omega).$ Since $F$ is bounded, it follows
   that 
  \begin{align*}
      X_k  \xrightarrow{\ltwo} \int_{\R^m} F(x, \cdot)\, dx. 
  \end{align*}
  For each $j$, we have by linearity 
  \begin{align*}
      D X_k = 
      \sum_{i = 1}^{k_j} \abs{Q_{ji}} D F(x_{ji}, \cdot). 
  \end{align*}
  Since $x \mapsto D_{\theta} F(x, \omega)$ is $L$-Lipschitz $d\theta \times
  d\bP$ a.e., we see that 
  \begin{align*}
      DX_k \xrightarrow{L^2} \int_{\R^m} D F(x, \cdot). 
  \end{align*}
  Because $D$ is a closed operator, this completes the proof. 
  \end{proof}
  
\begin{proof}[Proof of Proposition \ref{pro:apr-smo}.]
 Let $\eta$ be a standard mollifier, i.e., a nonnegative $C^{\infty}$-function
of $(y,\kz)$ supported by $\set{\abs{y}\leq 1, \abs{\kz}\leq 1}$  such that
$\norm{\eta}_{\lone}=1$. We define $\etak(y,\kz) = k^{-(n+nd)} \eta(k y, k\kz)$
and set 
\begin{align*}
    \fk(t,\omega, \cdot, \cdot) \coloneqq f(t,\omega, \cdot, \cdot) * \etak, 
    \eforeach t,\omega, 
\end{align*}
where $*$ denotes convolution in $(y,\kz)$, so that $\fk\in\con$. Standard
properties of mollification imply that each $\fk$ is a driver in $\con$ and that
$\fk \to f$  uniformly on compacts, for each $(t,\omega)$. The estimate
\begin{multline*}
   \abs{ (\fk)^i(\cdot,y',\kz') - (\fk)^i(\cdot,y,\kz)} \leq \\
    \leq  \sup_{\abs{\delta y} \leq 1/k,
\abs{\delta\kz}\leq 1/k} \abs{ f^i(\cdot,y+\delta y,\kz+\delta\kz) -
f^i(\cdot,y'+\delta y,\kz'+\delta \kz)},
\end{multline*}
makes it easy to show that $\fk\in \tri(L',\kappa')$, where $L'$ and $\kappa'$
depend only on $L$ and $\kappa$. 

Lemma \ref{lem:malliavin} above implies the Malliavin derivative commutes with
mollification in this case and preserves boundedness and the Lipschitz property
in $(y,\kz)$. Consequently,  $\fk \in \mal(L)$.

Next, we turn to the condition $\apb$. Given $(\rho,\sam)$ such $f\in
\apb(\rho,\sam)$ we observe that for any $k$, 
\begin{align*}
    a_m^T \fk(\cdot, y, \kz) &= 
    a_m^T \big(f(\cdot, \cdot, \cdot) * \etak\big)(y,\kz) 
    \leq \sup_{|y' - y| \leq 1, |\kz' - \kz| \leq 1} 
    \frac{1}{2}a_m^T f(\cdot, y', \kz')\\
    & \leq \rho(t) + \sup_{|\kz' - \kz| \leq 1} \frac{1}{2}|a_m^T \kz'|^2 
   \leq \rho(t) + |a_m|^2 + |a_m^T \kz|^2.
\end{align*}
Thus,  $\fk\in \apb(\rho', \samp)$ where $\rho' = \rho+ \sup_m \abs{a_m}^2$ and
$\samp =  \{2 a_m\}$.
\end{proof}

\begin{proposition}
  Assume that $\xi \in \linf$ and $f \in \tri(L,\kappa) \cap \mal(L) \cap
 \apb(\rho, \sam)$. Then \eqref{bsde} admits solution $(Y,\kZ)\in \sibm$ which
 satisfies
 \begin{align*}
  \norm{Y}_{\sinf}+\norm*{\kZ}_{\bmo} \leq C, 
  \ewhere \const{L,\kappa,\rho,\sam, \norm{\xi}_{\linf}}.
\end{align*}
\end{proposition}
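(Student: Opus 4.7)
The plan is to reduce to the smooth case handled in Proposition~\ref{pro:smo-tri-exist} by simultaneously approximating both $f$ and $\xi$, and then to pass to the limit using the stability estimate of Proposition~\ref{pro:stab-sqlq}.

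First, I would apply Proposition~\ref{pro:apr-smo} to obtain an approximation scheme $(f^j)$ with $f^j \in \tri(L', \kappa) \cap \con \cap \mal(L) \cap \apb(\rho', \sam')$, where $L', \rho', \sam'$ depend only on $L, \rho, \sam$. Simultaneously, I would approximate $\xi \in \linf$ by $(\xi^j) \subset \Doi$ satisfying $\norm{\xi^j}_{\linf} \leq \norm{\xi}_{\linf}$ and $\xi^j \to \xi$ in $L^q(\bP)$ for every $q < \infty$; such a sequence can be constructed via truncated mollification of the cylindrical conditional expectations $\bE[\xi \mid \sF_{t_1}, \dots, \sF_{t_m}]$. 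Proposition~\ref{pro:smo-tri-exist} then yields, for each $j$, a solution $(Y^j, \kZ^j) \in \sibm$ of $\bsde(\xi^j, f^j)$. The crucial point is that, inspecting the construction (specifically via the intermediate Proposition~\ref{pro:apr-Lip}), the bound $\norm{Y^j}_{\sinf} + \norm{\kZ^j}_{\bmo} \leq C$ depends only on $L', \kappa, \rho', \sam'$, and $\norm{\xi^j}_{\linf} \leq \norm{\xi}_{\linf}$, and is therefore uniform in $j$.

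With this uniform $\sibm$-bound in hand, Proposition~\ref{pro:stab-sqlq} applies with a threshold $q^*$ and constant independent of $j, k$: for every $q > q^*$,
\begin{align*}
   \norm{Y^k - Y^j}_{\sque} + \norm{\kZ^k - \kZ^j}_{\ltq}
   \leqc \norm{\xi^k - \xi^j}_{\lque}
   + \norm{f^k(\cdot, Y^j, \kZ^j) - f^j(\cdot, Y^j, \kZ^j)}_{\loq}.
\end{align*}
The first term vanishes as $j, k \to \infty$ by construction of $\xi^j$. For the second, inserting $\pm f(\cdot, Y^j, \kZ^j)$ and using that $f^j$ is a mollification of $f$ at scale $1/j$ together with the quadratic growth of $f \in \qua(L)$, the integrand is pointwise controlled by $\tfrac{C}{j \wedge k}(1 + \abs{Y^j} + \abs{\kZ^j})$ times a factor that is polynomial in $1+\abs{Y^j}+\abs{\kZ^j}$. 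The uniform $\sinf$-bound on $Y^j$ together with the uniform $\bmo$-bound on $\kZ^j$, which by the John--Nirenberg-type inequality provides uniform $L^p$-integrability of $\int_0^T \abs{\kZ^j}^2\, dt$ for all $p$ below some threshold depending only on the $\bmo$-norm, then drives this $\loq$-norm to zero.

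Consequently $(Y^j, \kZ^j)$ is Cauchy in $\sque \times \ltq$ and converges to some pair $(Y, \kZ)$. Passing to the limit in the BSDE is routine for the terminal and stochastic-integral terms, while for the drift $\int_{\cdot}^T f^j(\cdot, Y^j, \kZ^j)\, dt$ one applies dominated convergence using the uniform quadratic bound $\abs{f^j(\cdot, Y^j, \kZ^j)} \leqc 1 + \abs{Y^j}^2 + \abs{\kZ^j}^2$ and the uniform integrability just mentioned. Finally, the uniform $\sibm$-bound transfers to $(Y, \kZ)$ by lower semicontinuity: extracting a subsequence along which $Y^j \to Y$ pointwise a.s., Fatou's lemma gives the $\sinf$-estimate, while the conditional Fatou lemma applied to $\bE_\tau[\int_\tau^T \abs{\kZ}^2\, ds]$ yields the $\bmo$-estimate. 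The chief technical subtlety is the uniform vanishing of the driver-error term in the stability estimate, where the interplay between the $\qua(L)$-growth of $f$ and the $\bmo$-integrability of $\kZ^j$ is essential; uniqueness has already been secured by Proposition~\ref{pro:stab-sibm}.
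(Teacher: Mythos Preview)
Your proposal is correct and follows essentially the same route as the paper: approximate $f$ via Proposition~\ref{pro:apr-smo} and $\xi$ by $\Doi$-regular data, invoke Proposition~\ref{pro:smo-tri-exist} together with the uniform $\sibm$-bound inherited from Proposition~\ref{pro:apr-Lip}, and pass to the limit through the stability estimate of Proposition~\ref{pro:stab-sqlq}. The only cosmetic difference is that the paper handles the driver-error term $\norm{f^k(\cdot,Y^j,\kZ^j)-f^j(\cdot,Y^j,\kZ^j)}_{\loq}$ via convergence in measure plus uniform integrability, whereas you exploit the explicit mollification rate $1/(j\wedge k)$ from the $\qua(L)$-Lipschitz bound; both arguments rely on the same energy/John--Nirenberg control of $\int_0^T\abs{\kZ^j}^2\,dt$ coming from the uniform $\bmo$-bound.
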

 \begin{proof}
Let $(\fk)$ be the approximation scheme for $f$ from Proposition
\ref{pro:apr-smo}, and let  $\{\xik\}$ be a sequence of random vectors such
that each $\xik$ has a bounded Malliavin derivative, $\sup_k \norm{\xik}_{\linf}
< \infty$ and $\xik \to \xi$ in $\lpee$ for all $p$ (see, e.g., the proof of
\cite[Theorem 2.2., p.~2931]{Briand-Elie} for a construction). 

Proposition \ref{pro:smo-tri-exist} implies that the BSDE $\bsde(\xik, \fk)$ admits a
$\sinf\times\lii$-solution $(\Yk,\kZk)$ with
$\norm{\Yk}_{\sinf}+\norm*{\kZk}_{\bmo}\leq M$, for some $M$ indepedent of $k$.

Proposition \ref{pro:stab-sqlq} guarantees the existence of a universal constant $q^*
= q^*(f,\xi)$ such that any $q>q^*$ we have
\begin{multline*}
  \norm*{\Yk - \Yl}_{\sque}+\norm*{\kZk - \kZl}_{\ltq} 
  \leqc \norm{\xik - \xil}_{\lque}  +\\
  + \norm{ \fk(\cdot, \Yk, \kZk) - \fl(\cdot, \Yk, \kZk)}_{\loq}
\end{multline*}

The estimates on $\Yk$ and $\kZk$, together with the fact that $\fk \to f$
uniformly for $(y,\kz)$ in a compact set, imply that
\begin{align*}
\fk(\cdot, \cdot, \Yk, \kZk) - \fl( \cdot, \cdot, \Yk, \kZk) \to 0 
\end{align*}
in (the product) measure as $k,l\to\infty$. Again using our a-priori estimates
on $\Yk$ and $\kZk$, we conclude that $\fk(\cdot, \cdot, \Yk, \kZk) - \fl(\cdot,
\cdot, \Yk, \kZk) \to 0$ in $\loq$ as $l,k \to \infty$. Since $\set{\xik}$ is
Cauchy in $\lque$ by construction, it follows  that $\{\Yk,\kZk\}_k$ is Cauchy
in $\sque\times\ltp$, and thus converges to $(Y,\kZ)$ in $\sque\times\ltq$ to
\eqref{bsde}. It readily follows that $(Y,\kZ)$ is a $\sque\times\ltq$-solution
of \eqref{bsde}. The estimate on $\norm{Y}_{\sinf}$ follows from the uniform
estimates on$\norm{\Yk}_{\sinf}$, and then the estimate on $\norm{\kZ}_{\bmo}$
follows from standard techniques.
\end{proof}

\begin{remark}
While approximating $f$ via convolution allows us to remove the hypothesis that $f \in \con$, removing the assumption that $f \in \mal$ is more challenging. In section 4.2 of \cite{chassagneux2021reflected}, an innovative method is introduced for approximating a driver by one which is ``discrete path-dependent". Their ideas can be used to approximate a driver $f \in \tri \cap \con$ by drivers $f^k \in 
\tri \cap \con \cap \mal$, but unless $f$ is sub-quadratic, the approximation is not strong enough to conclude that the sequence $(Y^k, \kZ^k)$ of approximate solutions is Cauchy. Therefore we leave the question of removing the assumption that $f \in \mal$ to future research. 
\end{remark}

\begin{remark}
In light of the $\sibm$ estimate in Theorem \ref{pro:smo-tri-exist}, we can strengthen
the stability results in Propositions \ref{pro:stab-sibm} in the following way: the
dependence on the full-path norm $\norm{Y_i}_{\sinf}$, $i=1,2$ in
\ref{stab-sibm-est} and \eqref{stab-sqlq-est} can be replaced by the
$\linf$-norms $\norm{\xi_i}_{\linf}$, $i=1,2$ of the terminal conditions. 
\end{remark}

\section{A triangular game} \label{sec:game}
We now describe a game in which a Nash equilibrium can be produced via our main result, Theorem \ref{thm:main-tri}. For simplicity, we set up the game in the case $d = 1$. 
Players 1 and 2 choose strategies $\alpha$, $\beta \in \bmo(\R)$.
We have a state process $X$ which evolves according to 
\begin{align*}
    X_t = B_t = \int_0^t\big(\alpha_s + \frac{\beta_s}{\sqrt{1 + |\beta_s|^2}}) ds + B_t^{(\alpha, \beta)}, 
\end{align*}
where $B^{(\alpha, \beta)} = B - \int \big(\alpha + \frac{\beta}{\sqrt{1 + |\beta|^2}}) dt$ is a Brownian motion under the probability measure $\bP^{(\alpha, \beta)}$, given by 
\begin{align*}
    \frac{d \bP^{(\alpha, \beta)}}{d\bP} = \mathcal{E}\big(\int  (\alpha + \frac{\beta_s}{\sqrt{1 + |\beta_s|^2}}) dB\big)_T.
\end{align*}
The costs are given by 
\begin{align*}
    J^1(\alpha,\beta) = \bE^{\bP^{(\alpha,\beta)}}[\int_0^T \big(h^1 + \frac{1}{2} |\alpha|^2 \big)dt + g_1(X_{\cdot})], \\
    J^2(\alpha,\beta) = \bE^{\bP^{(\alpha,\beta)}}[\int_0^T  \big(h^2 + \frac{1}{2} |\beta|^2\big) dt + g_2(X_{\cdot})], 
\end{align*}
where $g_1, g_2: C([0,T] ; \R) \to \R$ are measurable and bounded, and $h^1,h^2$ are bounded functionals which are Malliavin regular in the sense of definition \ref{def:driversmooth}. We seek a Nash equilibrium, namely a pair $(\alpha^*, \beta^*) \in \bmo(\R)$ such that 
\begin{align*}
    J_1(\alpha^*,\beta^*) \leq J_1(\alpha,\beta^*) \text{ for all } \alpha \in \bmo(\R), \\
    J_2(\alpha^*, \beta^*) \leq J_2(\alpha^*,\beta) \text{ for all } \beta \in \bmo(\R). 
\end{align*}For given $(\alpha,\beta)$, we can define 
\begin{align*}
    Y^{\alpha,\beta, 1} = \bE_t^{\bP^{\alpha}}[ g_1(X_{\cdot}) + \int_t^T \big(h^1 + \frac{1}{2} |\alpha_2|^2\big) dt], \\
    Y^{\alpha,\beta, 2} = \bE_t^{\bP^{\alpha}}[ g_2(X_{\cdot})  + \int_t^T \big(h^2 + \frac{1}{2} |\beta_2|^2\big)  dt], 
\end{align*}
then $Y^{\alpha,\beta,1}$, $Y^{\alpha,\beta,2}$ satisfy $Y_0^{\alpha,i} = J^i(\alpha,\beta)$, and also solve the BSDE
\begin{align*}
    Y^{\alpha,\beta,1} = g_1(X_T) + \int_{\cdot}^T \big(h^1 + \frac{1}{2}|\alpha|^2 + Z^{\alpha, \beta, 1}(\alpha + \frac{ \beta}{\sqrt{1 + |\beta|^2}}) \big) dt - \int_{\cdot}^T Z^{\alpha, \beta, 1} dB, \\
    Y^{\alpha,\beta,2} = g_2(X_T) + \int_{\cdot}^T \big(h^2 + \frac{1}{2}|\beta|^2 +  Z^{\alpha, \beta, 2}(\alpha + \frac{ \beta}{\sqrt{1 + |\beta|^2}}) \big) dt - \int_{\cdot}^T Z^{\alpha, \beta, 2} dB.
\end{align*}
Accordingly, we introduce the Lagrangians
\begin{align*}
    L^1(a,b, z_1) = \frac{1}{2} |a|^2 + z_1(a + \frac{ b}{\sqrt{1 + b^2}}), \\
    L^2(a,b,z_2) = \frac{1}{2} |b|^2 +  z_2(a +\frac{ b}{\sqrt{1 + b^2}}). 
\end{align*}
For fixed $z = (z_1,z_2)$ the static game with payoffs $L^1(\cdot, \cdot, z_1)$, $L^2(\cdot, \cdot, z_2)$ has a unique Nash equilibrium given by 
\begin{align*}
    a^*(z) = - z_1, \\
    b^*(z) = \phi^{-1}(-z_2), 
\end{align*}
where $\phi : \R \to \R$ is given by $\phi(b) = b(1 + b^2)^{3/2}$. Setting $L^i(z) = L^i(a^*(z), b^*(z), z)$, we have 
\begin{align*}
    L^1(z) = -\frac{1}{2} |z_1|^2 + z_1 \frac{ \phi^{-1}(-z_2)}{\sqrt{ 1 +  |\phi^{-1}(-z_2)|^2}}, \\
    L^2(z) = \frac{1}{2} |\phi^{-1}(-z_2)|^2 - z_2 z_1 + z_2\frac{ \phi^{-1}(-z_2)}{\sqrt{ 1 +  |\phi^{-1}(-z_2)|^2}}
\end{align*}
So, we pose the BSDE
\begin{align} \label{gamebsde}
    Y_{\cdot} = g(X_{\cdot}) + \int_{\cdot}^T f(\cdot, Z) dt - \int_{\cdot}^T Z dB, 
\end{align} where $f(t,\omega, z) = (L^1(z) + h^1_t(\omega), L^2(z) + h^2_t(\omega))$. 

\begin{proposition} \label{prop:game}
  The BSDE \eqref{gamebsde} has a unique solution $(Y,\kZ) \in \sinf \times \bmo$, and the pair $(a^*(Z), b^*(Z))$ is a Nash equilibrium.
\end{proposition}

\begin{proof}
First, suppose that $\xi = g(B) \in \doi$. It is straightforward to check that $f \in \tri(L,\kappa)$ for some $L$ and $\kappa$. Define $f^{(k)}(t,\omega,z) = f(t,\omega,\pi^{(k)}(z))$, where $\pi^{(k)}$ is as in the proof of Proposition \ref{pro:apr-Lip}. Then $f^{(k)} \in \tri(L,\kappa) \cap C$. Let $(Y^{(k)}, Z^{(k)})$ be the unique solution to the Lipschitz BSDE 
\begin{align*}
Y_{\cdot}^{(k)} = \xi +  \int_{\cdot}^T f^{(k)}(\cdot,Z^{(k)}) dt - \int_{\cdot}^T Z^{(k)} dB. 
\end{align*}
Recall that $\psi^{(k)}(z) = \frac{k}{|z|} \psi(\frac{|z|}{k}) z$, where $\psi$ is as in the proof of Proposition \ref{pro:apr-Lip}. In particular, if we define $g^{(k)}(z) = \psi(\frac{|z|}{k})$, and also define $l(z_2) = \frac{\phi^{-1}(-z_2)}{\sqrt{1 + \phi^{-1}(-z_2)}}$, we can write  
\begin{align*}
    Y^{(k),1}_{\cdot}
    = \xi + \int_{\cdot}^T Z^{(k),1} \cdot A \, dt- \int_{\cdot}^T Z^{(k), 1} dB 
    = \xi + \int_t^T Z^{(k),2} dB^{(k)}, 
\end{align*}
where 
\begin{align*}
    A = -\frac{1}{2} |g^{(k)}(Z^{(k)})|^2 Z^{(k),1} + g^{(k)}(Z^{(k)}) l(\pi^{(k),2}(Z^{(k)})), \, \,
    B^{(k)} = B - \int A dt. 
\end{align*}
In particular, we see that $Y^{(k),1}$ is a martingale under an equivalent probability measure. The same argument works for $Y^{(k),2}$, leading to the estimate
\begin{align*}
    \sup_k \norm{Y^{(k)}}_{\sinf} \leqc \norm{\xi}_{\linf} + \norm{h}_{\sinf} 
\end{align*}
Since $|f^{(k),1}(z)| \leq |z_1|^2 + 1 + |h^1|$, a corresponding estimate on $Z^{(k),1}$ follows from studying the dynamics of $\exp{\lambda Y_t^{(k),1}}$ for large $\lambda$, as in the scalar case. Estimates on $Z^{(k),2}$ can then be obtained in a similar way, by considering the process $\exp{\lambda Y_t^{(k),2}} + |Z_t^{(k),1}|^2$. Thus we arrive at the estimate
\begin{align*}
    \sup_k \big(\norm{Y^{(k)}}_{\sinf} + \norm{ Z^{(k)}}_{\bmo}\big) \leq C, \, \, \const{\norm{\xi}_{\linf}, \norm{h}_{\sinf}}. 
\end{align*}
Thus by Proposition \ref{pro:smo-tri-exist} (actually a slight generalization, see remark \ref{rmk:approx}), we have a solution $(Y,Z) \in \sinf \times \bmo$ to \eqref{gamebsde}, which satisfies
\begin{align*}
    \norm{Y}_{\sinf} + \norm{Z}_{\bmo} \leq C, \, \, \const{\norm{\xi}_{\loi}}. 
\end{align*}
The same approximation argument appearing in the proof of Theorem \ref{thm:main-tri} now allows us to remove the assumption that $\xi \in \doi$. Uniqueness is given by Corollary \ref{pro:stab-sibm}. Finally, that $(\alpha^*,\beta^*) = (a^*(Z), b^*(Z))$ is a Nash equilibrium follows as in the proof of Proposition 3.6 of \cite{xing2018}. This completes the proof. 
\end{proof}

\bibliographystyle{amsalpha}
\bibliography{qtdrivers}
\end{document}